\newcommand{\NN}{\mathbb{N}}
\newcommand{\CZ}{Calder\'on--Zygmund }
\newcommand{\di}{\,{\rm{d}}}
\def\Re{\operatorname{Re}}
\def\Im{\operatorname{Im}}
\DeclareFontFamily{U}{mathx}{\hyphenchar\font45}
\DeclareFontShape{U}{mathx}{m}{n}{
      <5> <6> <7> <8> <9> <10>
      <10.95> <12> <14.4> <17.28> <20.74> <24.88>
      mathx10
      }{}
\DeclareSymbolFont{mathx}{U}{mathx}{m}{n}
\DeclareMathAccent{\widecheck}{0}{mathx}{"71}
\DeclareMathAccent{\wideparen}{0}{mathx}{"75}
\newcommand{\leqnomode}{\tagsleft@true}
\newcommand{\reqnomode}{\tagsleft@false}
\newtheorem{theorem}{Theorem}
\newtheorem{corollary}[theorem]{Corollary}
\newtheorem{proposition}[theorem]{Proposition}
\newtheorem{definition}[theorem]{Definition}
\newtheorem{remark}[theorem]{Remark}
\begin{document}

\title[BMO spaces on weighted homogeneous trees]{BMO spaces on weighted homogeneous trees}

\author[L.\ Arditti]{Laura Arditti}
\address{Dipartimento di Scienze Matematiche ``Giuseppe Luigi Lagrange'', Dipartimento di Eccellenza 2018-2022,  Politecnico di Torino,
		Corso Duca degli Abruzzi 24, 10129 Torino Italy }
\email{laura.arditti@polito.it}

\author[A.\ Tabacco]{Anita Tabacco}
\address{Dipartimento di Scienze Matematiche ``Giuseppe Luigi Lagrange'', Dipartimento di Eccellenza 2018-2022,  Politecnico di Torino,
		Corso Duca degli Abruzzi 24, 10129 Torino Italy }
\email{anita.tabacco@polito.it}

\author[M.\ Vallarino]{Maria Vallarino}
\address{Dipartimento di Scienze Matematiche ``Giuseppe Luigi Lagrange'', Dipartimento di Eccellenza 2018-2022,  Politecnico di Torino,
		Corso Duca degli Abruzzi 24, 10129 Torino Italy }
\email{maria.vallarino@polito.it}

\keywords{Hardy spaces; BMO spaces; homogeneous trees; nondoubling measure; sharp maximal function.}
\thanks{{\em Math Subject Classification} 05C05; 30H35; 42B30}

\begin{abstract}
We consider an infinite homogeneous tree $\mathcal V$ endowed with the usual metric $d$ defined on graphs and a weighted measure $\mu$. The metric measure space $(\mathcal V,d,\mu)$ is nondoubling and of exponential growth, hence the classical theory of Hardy and $BMO$ spaces does not apply in this setting. We introduce a space $BMO(\mu)$ on $(\mathcal V,d,\mu)$ and investigate some of its properties. We prove in particular that $BMO(\mu)$ can be identified with the dual of a Hardy space $H^1(\mu)$ introduced in a previous work and we investigate the sharp maximal function related with $BMO(\mu)$. 
\end{abstract}
\maketitle

\begin{centerline}
{\it Dedicated to Guido Weiss on the occasion of his 90th birthday} 
\end{centerline}

\section{Introduction}

The classical space of functions of bounded mean oscillation $BMO$ was introduced in the Euclidean setting by John and Nirenberg \cite{JN}. It is defined as the set of locally integrable functions $f$ such that 
\begin{equation}\label{defBMO}
\sup_{B}\frac{1}{|B|}  \int_B|f-f_B| \di x<\infty\,,
\end{equation}
where the supremum is taken over all Euclidean balls and $f_B$ denotes the average of $f$ on $B$. A celebrated result of Fefferman and Stein \cite{FeS} identifies $BMO$ with the dual of the classical Hardy space $H^1$. 

Various extensions of such theory have been considered in the literature. The first extension was developed 
on spaces of homogeneous type in the sense of Coifman and Weiss \cite{CW1, CW, S, SW}. These are metric measure spaces $(X,d,\mu)$ where the 
doubling condition is satisfied, i.e., there exists
a constant $C$ such that
\begin{equation}\label{doubling}
\mu\bigl(B(x,2r)\bigr)
\leq C\, \mu\bigl(B(x,r)\bigr)
\qquad\forall x \in X\,, \quad\forall r >0,
\end{equation}
where $B(x,r)$ denotes the ball centred at $x$ of radius $r$ in the metric $d$. In this setting functions in $BMO(\mu)$ satisfy the analogue of condition \eqref{defBMO}, where metric balls are involved. Subsequently extensions of the theory of Hardy and $BMO$ spaces have been considered 
in the literature on various metric measure spaces which do not satisfy the doubling condition (\ref{doubling}). Due to the lack of the doubling condition, it is less clear which is a suitable condition to define a $BMO$ space which enjoys all the properties of the classical one, and in particular it is not clear which subsets of the space can be chosen to replace balls in condition \eqref{defBMO}. 

The literature on this subject is huge and we shall only cite here some 
contributions. In particular, various results on this subject have been obtained on nondoubling Riemannian manifolds  \cite{CMM,  MMV, T} and on Lie groups of exponential growth \cite{MOV, V}. A few results have been obtained in the discrete setting of infinite graphs \cite{CM,Fe}. 
  
\smallskip

The goal of the present paper is to develop a theory for a $BMO$ space on a homogeneous tree $\mathcal V$ endowed with the usual metric $d$ defined on a graph and a weighted measure $\mu$ (see Section \ref{notation} for the details), such that $(\mathcal V,d,\mu)$ is a nondoubling space. Such weighted homogeneous trees were first studied by Hebisch and Steger \cite{HS}, who developed a \CZ theory on them.  In particular, they proved that there exists a family of appropriate sets in $\mathcal V$, which are called \CZ sets, which replace the family of balls in the classical \CZ theory. In \cite{ATV1} we introduced an atomic Hardy space $H^1(\mu)$ on $(\mathcal V,d,\mu)$, where atoms are functions supported in \CZ sets, with 
vanishing integral and satisfying a certain size condition. 

We shall define here a space of functions of bounded mean oscillation $BMO(\mu)$ adapted to this setting, for which the oscillation in the analogue of condition \eqref{defBMO} is measured on \CZ sets. Then we show that $BMO(\mu)$ can be identified with the dual of the Hardy space $H^1(\mu)$. More precisely, we introduce a family of spaces $BMO_q(\mu)$, with $q\in [1,\infty)$, for which the integrability condition \eqref{defBMO} is expressed in terms of an $L^q$-norm, and show that all such spaces coincide. As a consequence, we find the real interpolation spaces between $L^{r}(\mu)$ and $BMO(\mu)$, for $r\in [1,\infty)$. It would be interesting to find the complex interpolation space between $L^{r}(\mu)$ and $BMO(\mu)$, as well. To work in this direction, we introduce and study the sharp maximal function associated with \CZ sets, and show that the $L^p$-norm of a function is controlled by the $L^p$-norm of a variant of its sharp maximal function (see Theorem \ref{t: normaLpfsharp}). 

%
%


\bigskip

The paper is organized as follows. In Section \ref{notation} we introduce weighted homogeneous trees, we recall the definition of \CZ sets and study some of their geometric properties. In Section \ref{Hardy} we recall the definition of the Hardy space $H^1(\mu)$, we define the space $BMO(\mu)$ and prove the duality between these two spaces. As a consequence, we deduce a real interpolation result and a boundedness result for integral operators whose kernel satisfy a suitable H\"ormander condition. The last section is devoted to some inequalities involving the sharp maximal function defined in terms of \CZ sets. 

 \bigskip

Positive constants are denoted by $C$; these may differ from one line to another, and may depend on any quantifiers written, implicitly or explicitly, before the relevant formula.

	\section{Weighted homogeneous trees and \CZ sets} \label{notation}
	In this section we first introduce the infinite homogeneous tree and we define a distance and a measure on it. 

	\begin{definition}
		An infinite homogeneous tree of order $m+1$ is a graph $T=(\mathcal{V},\mathcal{E})$, where $\mathcal V$ denotes the set of vertices and $\mathcal E$ denotes the set of edges, with the following properties:
		\begin{enumerate}
			\item[(i)] $T$ is connected and acyclic;
			\item[(ii)] each vertex has exactly $m+1$ neighbours.
		\end{enumerate}
	\end{definition}	 
	
	
	On $\mathcal{V}$  we define the distance $d(x,y)$ between two vertices $x$ and $y$ as the length of the shortest path between $x$ and $y$.
	We also fix a doubly-infinite geodesic $g$ in $T$, that is a connected subset $g \subset \mathcal{V}$ such that
	\begin{enumerate}
		\item[(i)] for each element $v \in g$ there are exactly two neighbours of $v$ in $g$;
		\item[(ii)] for every couple $(u,v)$ of elements in $g$, the shortest path joining $u$ and $v$ is contained in $g$.
	\end{enumerate}
	We define a mapping $N: g \rightarrow \mathbb{Z}$ such that 
	\begin{equation}
		\left| N(x) - N(y) \right|  = d(x,y) \qquad \forall x,y \in g \,.
	\end{equation}		
This corresponds to the choice of an origin $o \in g$ (the only vertex for which $N(o)=0$) and an orientation for $g$; in this way we obtain a numeration of the vertices in $g$.
	We define the level function $\ell: \mathcal{V} \rightarrow \mathbb{Z}$ as
	\[
	\ell(x) = N(x') - d(x,x')\,,
	\]
	where $x'$ is the only vertex in $g$ such that $d(x,x') = \min\lbrace d(x,z): z\in g \rbrace$.
	For $x,y \in \mathcal{V}$ we say that $y$ \textit{lies above} $x$ if
	\[
	\ell(x) = \ell(y) - d(x,y).
	\]
	In this case we also say that $x$ \textit{lies below} $y$.
	

	Let $\mu$ be the measure on $\mathcal{V}$ such that for each function $f: \mathcal{V}\rightarrow\mathbb C$
	\begin{equation}
		\int_{\mathcal{V}} f \, \di\mu = \sum_{x \in \mathcal{V}} f(x) m^{\ell(x)}.
	\end{equation}
	Then $\mu$ is a weighted counting measure such that the weight of a vertex depends only on its level, and the weight associated to a certain level is given by $q$ times the weight of the level immediately underneath. In particular, it can be proved \cite{ATV1} that for every $x_0\in\mathcal V$ and $r>0$ the measure of the ball centred at $x_0$ of radius $r$ is $\mu\big(B(x_0,r)\big) =m^{\ell(x_0)} \, \frac{m^{r+1}+m^r-2}{m-1}$. Hence, the metric measure space $(\mathcal V, d, \mu)$ is of exponential growth and nondoubling.

To develop a Calder\'on--Zygmund theory on this nondoubling setting, it is useful to introduce suitable subsets of $\mathcal V$, called trapezoids. These sets were first defined in \cite{HS}. We shall recall below their definition and their  properties.
	
	\begin{definition}
		We call trapezoid a set of vertices $S \subset \mathcal V$ for which there exist $x_{S} \in \mathcal V$ and $a,b \in \mathbb R_+$ such that
		\begin{equation}
		S= \left\lbrace x \in \mathcal{V} : x \text{ lies below } x_S \,, a \leq \ell(x_S)-\ell(x) < b \right\rbrace.
		\end{equation}
	\end{definition}
	In the following we will refer to $x_S$ as the root node of the trapezoid.
	Among all trapezoids we are mostly interested in those where $a$ and $b$ are related by particular conditions, as specified in the following definitions.
	
	\begin{definition} \label{DefAdmissibleTrapezoid}
		A trapezoid $R \subset \mathcal{V}$ is an admissible trapezoid if and only if one of the following occurs:
		\begin{enumerate}
			\item[(i)] $R=\left\lbrace x_R \right\rbrace$ with $x_R \in \mathcal{V}$, that is $R$ consists of a single vertex;
			\item[(ii)] $\exists x_R \in \mathcal{V}, \, \exists h(R) \in \mathbb{N}^+$ such that
			\[
			R= \left\lbrace x \in \mathcal{V} : x \text{ lies below } x_R \,, h(R) \leq \ell(x_R)-\ell(x) < 2h(R) \right\rbrace.
			\]
		\end{enumerate}
		$R$ is called degenerate in case (i) and non-degenerate in case (ii). 
	\end{definition}
	We set $h(R)=1$ in the degenerate case. In both cases, $h(R)$ can be interpreted as the height of the admissible trapezoid, which coincides with the number of levels spanned by $R$. We call \emph{width} of the admissible trapezoid $R$ the quantity
		$
		w(R) = m^{\ell(x_R)}.
		$ We have that
	\begin{equation} \label{PropMeasureOfTrapezoid}
		\mu(R) = h(R)m^{\ell(x_R)} = h(R)w(R).
	\end{equation}
%
%
	
%
%

We now introduce the family of Calder\'on--Zygmund sets. They are trapezoids, even if not of admissible type (except for the degenerate case); they consist of suitable enlargements of admissible trapezoids, constructed according to the following definition.

\begin{definition}
	Given a non-degenerate admissible trapezoid $R$, the envelope of $R$ is the set
	\begin{equation}
	\tilde{R}= \left\lbrace x \in \mathcal{V} : x \text{ lies below } x_R \,, \frac{h(R)}{2} \leq \ell(x_R)-\ell(x) < 4h(R) \right\rbrace\,;
	\end{equation}
	we set $h(\tilde{R})=h(R)$. The envelope of a non-degenerate admissible trapezoid is also called a non-degenerate Calder\'on--Zygmund set. Given a degenerate admissible trapezoid $R$, the envelope of $R$ is the set $\tilde R=R$. We denote by $\mathcal R$ the family of all the Calder\'on--Zygmund sets. 
	\end{definition}
	
	We refer the reader to \cite{ATV1} for the properties of such sets, in particular see \cite[Propositions 2, 4]{ATV1} for the proof of the following result. 

%
%
%
%

	\begin{proposition}\label{mutildeR}
		Let $R$ be an admissible trapezoid and $\tilde R$ its envelope. Then
		\begin{itemize}
\item[(i)] $		 
		\mu(\tilde{R}) \leq 4\mu(R)\,;$
		\item [(ii)]  for all $z \in \tilde{R}$, we have 
		$\tilde{R} \subset B(z, 8h(\tilde{R})) \,.$
\end{itemize}
	\end{proposition}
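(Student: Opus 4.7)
The plan is to reduce both claims to direct bookkeeping with the level function, exploiting the fact that every full ``horizontal slice'' of descendants of $x_R$ at a fixed depth contributes the same amount to $\mu$. First I would dispose of the degenerate case: if $R=\{x_R\}$ then $\tilde R=R$ by definition, so (i) is trivial and (ii) reduces to $\{x_R\}\subset B(x_R,8)$, which is clear. Throughout what follows I restrict to a non-degenerate admissible $R$.

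The key observation is that, for a vertex $x$ lying below $x_R$, one has $\ell(x_R)-\ell(x)=d(x_R,x)=:k$, and at each fixed integer depth $k\ge 1$ the number of descendants of $x_R$ is exactly $m^k$ (since each vertex has $m$ children below it). Each such descendant carries weight $m^{\ell(x_R)-k}$, so the slice at depth $k$ contributes $m^k\cdot m^{\ell(x_R)-k}=m^{\ell(x_R)}$ to $\mu$. Consequently, if $N(R)$ and $N(\tilde R)$ denote the number of admissible integer values of $k$ involved in the definitions of $R$ and $\tilde R$ respectively, then
\[
\mu(R)=N(R)\,m^{\ell(x_R)}, \qquad \mu(\tilde R)=N(\tilde R)\,m^{\ell(x_R)}.
\]
Since $R$ uses the levels with $h(R)\le k<2h(R)$ and $\tilde R$ those with $h(R)/2\le k<4h(R)$, one has $N(R)=h(R)$ and $N(\tilde R)\le 4h(R)-\lceil h(R)/2\rceil\le \tfrac{7}{2}h(R)$. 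This yields $\mu(\tilde R)\le \tfrac{7}{2}\mu(R)\le 4\mu(R)$, which is (i).

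For (ii), note that by definition any $x\in\tilde R$ lies below $x_R$, so $d(x,x_R)=\ell(x_R)-\ell(x)<4h(R)=4h(\tilde R)$. Then for any $z\in\tilde R$, the triangle inequality gives
\[
d(x,z)\le d(x,x_R)+d(x_R,z)<8h(\tilde R),
\]
which is the required inclusion.

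The only technical annoyance is the ceiling function coming from $h(R)$ possibly being odd or equal to $1$ (forcing a case-by-case check that the ratio $N(\tilde R)/N(R)$ never exceeds $7/2$); no substantive obstacle is expected, as both parts follow purely from the geometric structure of trapezoids and the level-homogeneity of $\mu$.
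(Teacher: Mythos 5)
Your proof is correct. Note that the paper does not give its own proof of this proposition but refers to \cite[Propositions 2, 4]{ATV1}; your argument --- counting the identical contribution $m^{\ell(x_R)}$ of each horizontal slice to get $\mu(\tilde R)=(4h(R)-\lceil h(R)/2\rceil)\,m^{\ell(x_R)}\leq \tfrac72\,\mu(R)$ for (i), and the triangle inequality through the root node $x_R$ for (ii) --- is the standard and essentially the intended one, and all steps (in particular that every vertex has exactly one neighbour above and $m$ below, so depth-$k$ descendants number $m^k$, and that $d(x,x_R)=\ell(x_R)-\ell(x)<4h(\tilde R)$ for $x\in\tilde R$) check out, including the degenerate case.
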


For any Calder\'on--Zygmund set $\tilde R$ we define an enlargement of it, whose measure is comparable with its measure. This can be thought as a substitute for the doubling condition in this setting. 
	\begin{definition}\label{tildeQ*}
		Given a Calder\'on--Zygmund set $\tilde{R}$, we define the set
		\begin{equation}
			\tilde{R}^* = \left\lbrace x \in \mathcal{V} : d(x,\tilde{R}) < h({\tilde{R})}/4 \right\rbrace\,.
		\end{equation}
			\end{definition}
It is easy to see that there exists a positive constant $C$ such that for every Calder\'on--Zygmund set $\tilde{R}$
\begin{equation}\label{mutildeQ*}
\mu(\tilde{R}^*)\leq C\mu(\tilde{R})\,.
\end{equation}	

\smallskip

In the following proposition we construct a covering of $\mathcal V$ made by an increasing family of Calder\'on--Zygmund sets. 
 
 \begin{proposition}\label{l: ricoprimento}

There exists a family of Calder\'on--Zygmund sets $\{\tilde{Q}_n\}_{n=0}^\infty$ such that $\tilde{Q}_n \subset \tilde{Q}_{n+1}$ and $\bigcup_n \tilde{Q}_n = \mathcal{V}$.
\end{proposition}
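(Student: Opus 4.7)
The plan is to pick roots along the fixed geodesic $g$ at consecutively higher levels, with the heights growing to infinity. Explicitly, for $n\geq 1$ let $x_n\in g$ be the vertex with $N(x_n)=n$, let $R_n$ be the admissible trapezoid of height $h(R_n)=n$ rooted at $x_n$, and let $\tilde{Q}_n$ be its envelope, so that
\[
\tilde{Q}_n=\bigl\{v\in \mathcal{V}\,:\,v\text{ lies below }x_n,\ n/2\leq n-\ell(v)<4n\bigr\};
\]
for $n=0$ take $\tilde{Q}_0=\{o\}$, which is a degenerate Calder\'on--Zygmund set. By construction each $\tilde{Q}_n$ lies in $\mathcal R$.

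For the nesting, two observations do the work. First, the relation ``lies below'' is transitive, because in a homogeneous tree every vertex has a unique parent (the neighbour with level one higher), so the ancestor relation is the transitive closure of the parent map. Since $x_{n+1}$ is the parent of $x_n$, every $v$ below $x_n$ is also below $x_{n+1}$. Second, the inequalities $n/2\leq n-\ell(v)<4n$ trivially imply $(n+1)/2\leq (n+1)-\ell(v)<4(n+1)$. Together these give $\tilde{Q}_n\subset \tilde{Q}_{n+1}$; the base case $\tilde{Q}_0\subset\tilde{Q}_1$ is the easy observation that $o=x_0$ is below $x_1$ and satisfies $1/2\leq 1-\ell(o)<4$.

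For exhaustion I fix any $v\in\mathcal V$ and let $v'$ be its (unique) closest point on $g$, so that $N(v')=\ell(v)+d(v,v')$. Iterating the parent map starting from $v$ first climbs to $v'$ in $d(v,v')$ steps and then continues upward along $g$; consequently $v$ lies below $x_n$ for every $n\geq N(v')$. The level conditions $-3n<\ell(v)\leq n/2$ are each eventually satisfied as $n\to\infty$, so $v$ belongs to $\tilde{Q}_n$ for all $n$ sufficiently large.

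The only real obstacle is to be honest about the combinatorial geometry of the tree, namely the transitivity of ``lies below'' and the precise description of the parent chain relative to $g$; once these are made explicit, both the monotonicity and the exhaustion reduce to elementary inequalities in $n$.
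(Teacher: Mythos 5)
Your proof is correct and follows essentially the same route as the paper's: both constructions take Calder\'on--Zygmund sets rooted at the successive vertices of the geodesic $g$ (each root the father of the previous one) with heights growing linearly, and both verify nesting and exhaustion by the same elementary inequalities on levels. The only differences are cosmetic --- height $n$ versus $n+1$, and your explicit justification of the transitivity of ``lies below'', which the paper leaves implicit.
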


\begin{proof}
	Consider the family $\{\tilde Q_n\}_{n=0}^\infty$ where
	\begin{itemize}
		\item $\tilde{Q}_0$ is the Calder\'on--Zygmund set with root node $x_0=o$ and height $h_0=1$ (where $o$ denotes the only vertex in the doubly-infinite geodesic $g$ such that $\ell(o)=0$);
		\item $\forall n \geq 1$, $\tilde{Q}_n$ is the Calder\'on--Zygmund set with root node $x_n$ that is the father node of $x_{n-1}$, i.e. it is the only neighbour of $x_{n-1}$ that lies above $x_{n-1}$, and height $h_n=h_{n-1}+1$ (then we have $h_n=n+1$).
	\end{itemize}
	We first show that $\tilde{Q}_n \subset \tilde{Q}_{n+1}$.
	Let $x\in \tilde{Q}_n$. By definition $x$ lies below $x_n$, then by construction $x$ also lies below $x_{n+1}$. Moreover, $\ell(x_{n+1})-\ell(x) = \ell(x_n)+1-\ell(x)$ and we have that
	\begin{gather*}
	\ell(x_{n+1})-\ell(x) < 4h_n+1 < 4h_{n+1} \,, \qquad 
	\ell(x_{n+1})-\ell(x) \geq \frac{h_n}{2}+1 \geq \frac{h_{n+1}}{2} \,.
	\end{gather*}
	So we conclude that $x \in \tilde{Q}_{n+1}$.
	
	To show that $\bigcup_n \tilde{Q}_n = \mathcal{V}$, consider $x \in \mathcal{V}$. Denote by $k$ the smallest index such that $x$ lies below $x_k$ (and so $x$ lies below $x_j$, $\forall j \geq k$) and set $\ell(x_k)-\ell(x)=d$.
	We seek for an index $j\geq k$ such that $x \in \tilde{Q}_j$, that is
	\begin{equation*}
	\frac{j+1}{2} = \frac{h_j}{2} \leq \ell(x_j)-\ell(x) < 4h_j = 4(j+1) \,.
	\end{equation*}
	Observe that $\ell(x_j) - \ell(x) = \ell(x_j)-\ell(x_k)+\ell(x_k)-\ell(x) = j-k+d$, so that
	\begin{equation*}
	\frac{j+1}{2} \leq j-k+d <4(j+1) \quad \iff \begin{cases}
	j > \frac{d-k-4}{3}\\
	j \geq 1+2(k-d)
	\end{cases}
	\end{equation*}
	Thus it is sufficient to take $j \geq \max \left\lbrace  k, 1+\frac{d-k-4}{3}, 1+2(k-d) \right \rbrace$. 
\end{proof}

For every $p\in (1,\infty)$ and for every $n\in\mathbb N$, let $X_n^p$ be the space $L^p_0(\tilde Q_n)$ of all functions in $L^p(\mu)$ which are
supported in the set $\tilde Q_n$ introduced in Proposition \ref{l: ricoprimento} and have vanishing integral. The space $(X_n^p, \|\cdot\|_{L^p})$ is a Banach
space. We denote by $X^p$ the space $L^p_{c,0}(\mu)$ of all functions in $L^p(\mu)$ with compact support
and vanishing integral, interpreted as the strict inductive limit of the spaces $X_n^p$ 
(see \cite[II, p. 33]{Bou} for the definition of the strict inductive limit topology). This space will be a key ingredient of next section. In particular, we shall use the following fact.

\begin{proposition}\label{dualXp}
Let $p\in (1,\infty)$. For every function $F:\mathcal V\rightarrow \mathbb C$, the functional defined by 
\begin{equation}\label{formuladualXp}
\ell(g)=\int Fg\di\mu=\int (F+c)g\di\mu \qquad \forall g\in X^p, c\in\mathbb C,
\end{equation}
lies in the dual of $X^p$.
On the other hand, for every functional $\ell$ in the dual of $X^p$ there exists $F:\mathcal V\rightarrow \mathbb C$ such that \eqref{formuladualXp} holds.
\end{proposition}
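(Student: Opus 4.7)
The plan is to establish the two directions separately, using the structure of the strict inductive limit topology on $X^p$ together with the covering $\{\tilde Q_n\}$ from Proposition \ref{l: ricoprimento}.

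For the easy direction, I would check that for any $F:\mathcal V\to\mathbb C$ the functional $\ell(g)=\int Fg\di\mu$ is continuous on $X^p$. By the characterization of continuity on a strict inductive limit, it suffices to show continuity of $\ell|_{X_n^p}$ for every $n$. For $g\in X_n^p$ the support of $g$ is contained in the finite set $\tilde Q_n$, so $F\chi_{\tilde Q_n}\in L^{p'}(\mu)$ automatically and H\"older gives
$$|\ell(g)| \leq \|F\chi_{\tilde Q_n}\|_{L^{p'}(\mu)}\,\|g\|_{L^p(\mu)}.$$
The fact that $\int Fg\di\mu=\int(F+c)g\di\mu$ for every $c\in\mathbb C$ is immediate from $\int g\di\mu=0$.

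For the converse direction, I would build $F$ by gluing local Riesz representatives. First, observe that $X_n^p=L^p_0(\tilde Q_n)$ is a (codimension-one) closed subspace of the finite-dimensional space $L^p(\tilde Q_n)$. Given $\ell\in (X^p)^*$, its restriction to $X_n^p$ is continuous, so by Hahn--Banach it extends to a continuous functional on $L^p(\tilde Q_n)$, and by Riesz there exists $F_n\in L^{p'}(\tilde Q_n)$ with
$$\ell(g)=\int F_n\,g\di\mu \qquad\forall g\in X_n^p.$$
Since $\tilde Q_n\subset\tilde Q_{n+1}$ by Proposition \ref{l: ricoprimento}, and thus $X_n^p\subset X_{n+1}^p$, the representatives must be compatible on the smaller set: for all $g\in X_n^p$,
$$\int_{\tilde Q_n}(F_{n+1}-F_n)\,g\di\mu=0.$$
This means $F_{n+1}|_{\tilde Q_n}-F_n$ is orthogonal to every mean-zero $L^p$ function on $\tilde Q_n$, hence is a constant $c_n$ on $\tilde Q_n$.

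The key observation is that we may replace $F_{n+1}$ by $F_{n+1}-c_n$ without affecting the representation on $X_{n+1}^p$ (because any $g\in X_{n+1}^p$ has $\int g\di\mu=0$); after this renormalization $F_{n+1}|_{\tilde Q_n}=F_n$. Iterating from $n=0$, the sequence $\{F_n\}$ becomes a coherent family, and because $\bigcup_n\tilde Q_n=\mathcal V$ we obtain a well-defined function $F:\mathcal V\to\mathbb C$ with $F|_{\tilde Q_n}=F_n$. For any $g\in X^p$ pick $n$ with $g\in X_n^p$; then $\supp g\subset\tilde Q_n$ and $\ell(g)=\int F_n g\di\mu=\int Fg\di\mu$, which combined with the vanishing integral of $g$ yields the stated formula $\ell(g)=\int(F+c)g\di\mu$ for every $c\in\mathbb C$.

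The only subtle point I expect is the consistency step: one must recognize that the non-uniqueness inherent in the Hahn--Banach extension (Riesz representatives on $X_n^p$ are determined only up to an additive constant) exactly matches the overlap ambiguity between $F_n$ and $F_{n+1}|_{\tilde Q_n}$, so that a careful inductive renormalization produces a globally defined $F$. The rest is bookkeeping.
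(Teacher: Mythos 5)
Your proposal is correct and follows essentially the same route as the paper: continuity via H\"older on each $X_n^p$, then local representatives $F_n\in L^{p'}(\tilde Q_n)$ obtained by extension, with the observation that two representatives differ by a constant on the overlap (since they annihilate the codimension-one subspace of mean-zero functions), glued into a global $F$ after adjusting by constants. The only cosmetic difference is that the paper fixes the constants by normalizing $\int_{\tilde Q_1}(F_n+c_n)\di\mu=0$ against a single fixed set rather than by your step-by-step inductive renormalization; both achieve the same coherence.
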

\begin{proof}
On one hand, let $F:\mathcal V\rightarrow \mathbb C$ and consider the linear functional $\ell:X^p\rightarrow \mathbb C$ defined by $\ell(g)=\int Fg\di\mu$. Then for every $n\in\mathbb N$ by H\"older's inequality we have 
$$
\Big|\int Fg\di\mu\Big|\leq \|F\|_{L^q(\tilde Q_n)}\,\|g\|_{L^p(\tilde Q_n)}\qquad \forall g\in X_n^p\,,
$$
where $q=p'$. Hence, $\ell$ is continuous on every $X_n^p$ and continuous on $X^p$. 

Suppose now that $\ell$ is a continuous linear functional on $X^p$. Then, for every $n\in\mathbb N$, $\ell\in (X_n^p)^*$, hence it can be extended to a bounded linear functional on $L^p(\tilde Q_n)$ and there exists a function $F_n\in L^q(\tilde Q_n)$ such that 
$$
\ell(g)=\int (F_n+c) g\di\mu\qquad \forall g\in X_n^p, \quad c\in\mathbb C\,.
$$
Notice that we used the fact that $g$ has vanishing integral in the formula above. For every $n\in \mathbb N$ we choose the constant $c_n$ such that $\int_{\tilde Q_1} (F_n+c_n)\di\mu=0$. This implies that the restriction of $F_n+c_n$ on $\tilde Q_j$ coincide with $F_j+c_j$ for every $j<n$. Hence for every $x\in \mathcal V$ we can define 
$$ 
F(x)=F_n(x)+c_n\,,
$$
where $n$ is any integer such that $x\in\tilde Q_n$. Then 
$$
\ell(g)=\int (F+c)\,g\di\mu\qquad \forall g\in X^p,\quad c\in\mathbb C\,,
$$
as required.
\end{proof}

	\section{Hardy and $BMO$ spaces}\label{Hardy}
In this section we first recall the definition of atomic Hardy spaces given in \cite{ATV1}.

\begin{definition}
A function $a$ is a {\emph{$(1,p)$-atom}}, for $ p\in(1, \infty]$, 
if it satisfies the following properties:
\begin{enumerate}
\item[(i)] \,$a$ is supported in a \CZ set $\tilde R$;
\item[(ii)] \,\,$\|a\|_{L^p}\leq \mu (\tilde R)^{1/p-1}\,;$ 
\item[(iii)] \,\,$\int_{\mathcal V} a\di\mu =0$\,.
\end{enumerate}
\end{definition}

\begin{definition}
The Hardy space $H^{1,p}(\mu)$ is the space of all functions $g$ in $ L^1(\mu)$ 
such that $g=\sum_{j\in\mathbb N} \lambda_j\, a_j$, where $a_j$ are $(1,p)$-atoms and $\lambda _j$ 
are complex numbers such that $\sum _{j\in\mathbb N} |\lambda _j|<\infty$. We denote by $\|g\|_{H^{1,p}}$ 
the infimum of $\sum_{j\in\mathbb N}|\lambda_j|$ over all decompositions $g=\sum_{j\in\mathbb N}\lambda_j\,a_j$, 
where $a_j$ are $(1,p)$-atoms. 
\end{definition}
The space $H^{1,p}(\mu)$ endowed with the norm $\|\cdot\|_{H^{1,p}}$ is a Banach space. 

For every $p\in (1,\infty]$ we also introduce the spaces  
$$H^{1,p}_{\rm{fin}}(\mu)=\Big\{g\in L^1(\mu):~g=\sum_{j=1}^N\lambda_j\,a_j,\,a_j~ (1,p)-atoms, \lambda_j\in\mathbb C, N\in\NN \Big\}\,.$$  
 
\begin{proposition}\label{coincidono}
For any $p\in (1,\infty)$, the following hold:
\begin{itemize}
\item[(i)] $H^{1,p}(\mu)=H^{1,\infty}(\mu)$ and there exists a constant $C_p$ such that 
$$
\|g\|_{H^{1,p}}\leq \|g\|_{H^{1,\infty}}\leq C_p\|g\|_{H^{1,p}}\,;
$$
\item[(ii)] for every $L\in (H^1(\mu))^*$, $\|L\|_{(H^{1,p})^*}\leq C_p\|L\|_{(H^1)^*}$;
\item[(iii)] $H^{1,\infty}_{\rm{fin}}(\mu)\subset H^{1,p}_{\rm{fin}}(\mu)$;
\item[(iv)] for every Calder\'on--Zygmund set $\tilde R$, $L^p_0(\tilde R)\subset H^{1,\infty}_{\rm{fin}}(\mu)$.
\end{itemize}
 \end{proposition}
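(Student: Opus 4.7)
My plan is to address the four parts in the logical order (iv), (iii), easy-half of (i), hard-half of (i), and finally (ii), since the first three are essentially bookkeeping and (ii) will be a direct corollary of the reverse norm inequality in (i). For (iv), the key observation is that every Calder\'on--Zygmund set $\tilde R$ is a finite subset of $\mathcal V$ (a trapezoid of finite height in the locally finite tree), so $L^p_0(\tilde R)=L^\infty_0(\tilde R)$ set-theoretically; given $f\in L^p_0(\tilde R)$, the function $a=(\|f\|_\infty\mu(\tilde R))^{-1}f$ is supported in $\tilde R$, has vanishing integral, and satisfies $\|a\|_\infty\leq\mu(\tilde R)^{-1}$, so it is a $(1,\infty)$-atom and $f$ is a single-atom element of $H^{1,\infty}_{\rm{fin}}(\mu)$. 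For (iii), H\"older applied to any $(1,\infty)$-atom $a$ supported in $\tilde R$ yields $\|a\|_p\leq\|a\|_\infty\mu(\tilde R)^{1/p}\leq\mu(\tilde R)^{1/p-1}$, so $a$ is automatically a $(1,p)$-atom and the inclusion passes to finite linear combinations. The easy direction of (i), $\|g\|_{H^{1,p}}\leq\|g\|_{H^{1,\infty}}$, then follows because any $(1,\infty)$-atomic decomposition of $g$ is simultaneously a $(1,p)$-atomic decomposition.

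The substance lies in the reverse inequality $\|g\|_{H^{1,\infty}}\leq C_p\|g\|_{H^{1,p}}$, and by a standard density argument it suffices to decompose every $(1,p)$-atom $a$ as $a=\sum_k\lambda_k a_k$ with $(1,\infty)$-atoms $a_k$ and $\sum_k|\lambda_k|\leq C_p$ depending only on $p$. My plan is to run a Calder\'on--Zygmund stopping-time argument adapted to the tree. Letting $\tilde R$ contain $\supp a$ with $\|a\|_p\leq\mu(\tilde R)^{1/p-1}$, define the super-level sets $E_j=\{x\in\tilde R:|a(x)|>2^j\mu(\tilde R)^{-1}\}$ for $j\geq 0$; Chebyshev and the atomic $L^p$ bound give $\mu(E_j)\leq C\,2^{-jp}\mu(\tilde R)$. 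Write $a=a\mathbf 1_{E_0^c}+\sum_{j\geq 0}a\mathbf 1_{E_j\setminus E_{j+1}}$ and subtract suitable constants to restore vanishing integrals: the first piece is already a bounded multiple of a $(1,\infty)$-atom on $\tilde R$, while each remaining piece should be localized to a bounded-overlap cover of $E_j$ by CZ subsets $\tilde R_{j,k}$ of measure $\sim 2^{-jp}\mu(\tilde R)$. Each localized and mean-corrected piece then has $L^\infty$-bound $\lesssim 2^j\mu(\tilde R)^{-1}$ on $\tilde R_{j,k}$, making it a multiple of a $(1,\infty)$-atom with coefficient $\lambda_{j,k}\sim 2^j\mu(\tilde R)^{-1}\cdot\mu(\tilde R_{j,k})\sim 2^{j(1-p)}$. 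Summing first over the bounded-overlap index $k$ and then geometrically over $j\geq 0$ converges precisely because $p>1$, yielding the constant $C_p$.

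The main obstacle will be the covering step: the CZ family (Definition \ref{tildeQ*}) consists of trapezoid envelopes of a rigidly prescribed shape and the measure $\mu$ is nondoubling, so no Euclidean Whitney-type cover of the level set $E_j$ is available off the shelf. I expect to need the combinatorial geometry of admissible trapezoids together with the enlargement $\tilde R^*$ and its measure control \eqref{mutildeQ*} and Proposition \ref{mutildeR} to build an acceptable cover $\{\tilde R_{j,k}\}$ and to ensure the mean-zero corrections assemble into atoms rather than producing a divergent tail. Once (i) is in hand, (ii) is immediate: for $L\in(H^1(\mu))^*$ and $g\in H^{1,p}(\mu)$, the norm equivalence in (i) gives $|L(g)|\leq\|L\|_{(H^1)^*}\|g\|_{H^{1,\infty}}\leq C_p\|L\|_{(H^1)^*}\|g\|_{H^{1,p}}$, yielding the claimed operator-norm bound $\|L\|_{(H^{1,p})^*}\leq C_p\|L\|_{(H^1)^*}$.
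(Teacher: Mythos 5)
Parts (ii), (iii), (iv) and the inequality $\|g\|_{H^{1,p}}\leq\|g\|_{H^{1,\infty}}$ in your proposal match the paper's proof exactly: the finiteness of a Calder\'on--Zygmund set gives (iv), H\"older's inequality shows every $(1,\infty)$-atom is a $(1,p)$-atom, which gives (iii) and the easy half of (i), and (ii) is the one-line consequence of (i) that you state. No issues there.

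The problem is the reverse inequality $\|g\|_{H^{1,\infty}}\leq C_p\|g\|_{H^{1,p}}$, which is the only substantive assertion of the proposition. The paper does not prove it in situ: it simply quotes \cite[Proposition 5]{ATV1}. Your reduction to decomposing a single $(1,p)$-atom into $(1,\infty)$-atoms with $\ell^1$-controlled coefficients is correct, and the level-set strategy is the standard route (and essentially the one behind the cited result). But you stop exactly at the hard point: you never construct the bounded-overlap cover of $E_j=\{|a|>2^j\mu(\tilde R)^{-1}\}$ by Calder\'on--Zygmund sets $\tilde R_{j,k}$ with $\sum_k\mu(\tilde R_{j,k})\lesssim\mu(E_j)$, and you acknowledge as much (``I expect to need \dots''). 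In this nondoubling tree that covering is precisely where all the work lies; it is the content of the Calder\'on--Zygmund decomposition of Hebisch--Steger, recalled as \cite[Lemma 1]{ATV1}, in which one decomposes $a$ at each height $2^j\mu(\tilde R)^{-1}$ into a good part $g^j$ with $|g^j|\leq C2^j\mu(\tilde R)^{-1}$ and a bad part supported in the envelopes of maximal admissible trapezoids, and then telescopes $a=g^0+\sum_{j\geq 0}(g^{j+1}-g^j)$. Your mean-correction step is also glossed over: subtracting the average of $a\mathbf{1}_{E_j\setminus E_{j+1}}$ on each $\tilde R_{j,k}$ produces error terms that must be reabsorbed somewhere, and it is the telescoping form of the stopping-time decomposition (not a naive layer-cake splitting) that makes this bookkeeping close up. As written, the proposal is a correct plan with the decisive lemma missing rather than a complete proof; to make it rigorous you should either carry out the Hebisch--Steger decomposition on the tree or, as the paper does, invoke \cite[Proposition 5]{ATV1} directly.
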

 \begin{proof}
 Property (i) follows from \cite[Proposition 5]{ATV1}. Take now $L\in (H^1(\mu))^*$ and $g\in H^1(\mu)$. Then
 $$
 |L(g)|\leq \|L\|_{(H^1)^*}\|g\|_{H^{1,\infty}}\leq  C_p\|L\|_{(H^1)^*}\|g\|_{H^{1,p}}\,,
 $$
 so that (ii) follows. 
 
 Property (iii) holds since every $(1,\infty)$-atom is a $(1,p)$-atom. 
 
To prove (iv) let $\tilde R$ be a Calder\'on--Zygmund set and $g\in L^p_0(\tilde R)$ be a function in $L^p(\mu)$ supported in $\tilde R$, with vanishing integral. Then $\|g\|_{L^{\infty}}=\max_{x\in \tilde R}|g(x)|<\infty$. Hence, $\mu(\tilde R)^{-1}\|g\|_{L^{\infty}}^{-1}g$ is a $(1,\infty)$-atom. This proves (iv). 
 \end{proof}
In the sequel we shall denote by $H^1(\mu)$ the space $H^{1,\infty}(\mu)$ endowed with the norm $\|\cdot\|_{H^1}=\|\cdot\|_{H^{1,\infty}}$.

\bigskip

We now introduce the space of functions of bounded mean oscillation. For every locally integrable function $f$ and every Calder\'on--Zygmund set $\tilde R$ we denote by $f_{\tilde R}$ the average of $f$ on $\tilde R$, i.e., $f_{\tilde R}=\frac{1}{\mu(\tilde R)}\int_{\tilde R}f\di\mu$.

\begin{definition}
Let $q\in [1,\infty)$. The space $\mathcal{B}\mathcal{M}\mathcal{O}_q(\mu)$ is the space of all functions in $L^q_{\rm{loc}}(\mu)$ such that
$$\sup_{\tilde R\in\mathcal R}\Big( \frac{1}{\mu(\tilde R)}\int_{\tilde R}|f-f_{\tilde R}|^q \di \mu \Big)^{1/q} <\infty\,.$$
The space $BMO_q(\mu)$ is the quotient of $\mathcal{B}\mathcal{M}\mathcal{O}_q(\mu)$ by constant functions. It is a Banach space endowed with the norm 
$$\|f\|_{BMO_q}=\sup\Big\{  \Big(  \frac{1}{\mu(\tilde R)}\int_{\tilde R}|f-f_{\tilde R}|^q \di\mu\Big)^{1/q} :~\tilde R   \in\mathcal R \Big\}\,.$$
\end{definition}
 We now prove the duality result between $BMO_1(\mu)$ and $H^1(\mu)$ and then show, as a consequence, that all $BMO_q(\mu)$ spaces coincide with $BMO_1(\mu)$, for $q\in (1,\infty)$.  
\begin{theorem}\label{t: duality}
The following hold:
\begin{itemize}
\item[(i)] for every function $f$ in $ BMO_1(\mu)$ there exists a bounded linear functional $L_f$ on $H^1(\mu)$ such that  
\begin{equation}\label{Lf}
L_f(g)=\int_{\mathcal V} f\,g\di\mu\qquad \forall g\in H^{1,\infty}_{\rm{fin}}(\mu)\,,
\end{equation}
and there exists $A>0$ such that $\|L_f\|_{(H^1)^*} \leq  A\,\|f\|_{BMO_1}$; 
\item[(ii)] for every bounded linear functional $L$ on $H^1(\mu)$ there exists a unique function $f\in BMO_1(\mu)$ such that $L=L_f$ and $\|f\|_{BMO_1}\leq C_2\,\|L \|_{(H^1)^*}$, where $C_2$ is the constant which appears in Proposition \ref{coincidono}(i). 
\end{itemize}
\end{theorem}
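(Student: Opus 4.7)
The plan is to follow the classical Fefferman--Stein pattern adapted to Calder\'on--Zygmund sets: for (i) exploit atomic cancellation to pair $f$ against atoms, and for (ii) extract a representing function from $L$ via Proposition \ref{dualXp} applied to test functions supported in Calder\'on--Zygmund sets.

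For (i), fix a $(1,\infty)$-atom $a$ supported in a Calder\'on--Zygmund set $\tilde R$. The cancellation $\int a\di\mu=0$ yields
\[
\Big|\int fa\di\mu\Big|=\Big|\int(f-f_{\tilde R})a\di\mu\Big|\leq\|a\|_{L^\infty}\int_{\tilde R}|f-f_{\tilde R}|\di\mu\leq\|f\|_{BMO_1}.
\]
For $g=\sum_{j=1}^N\lambda_j a_j\in H^{1,\infty}_{\rm{fin}}(\mu)$ this gives $|\int fg\di\mu|\leq\|f\|_{BMO_1}\sum_j|\lambda_j|$. Given $g\in H^1(\mu)$ with an atomic decomposition $g=\sum_j\lambda_j a_j$ satisfying $\sum_j|\lambda_j|<\infty$, declare $L_f(g):=\sum_j\lambda_j\int fa_j\di\mu$; the series converges absolutely. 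A standard density argument based on the partial-sum estimate $\|g-g_N\|_{H^1}\leq\sum_{j>N}|\lambda_j|\to0$ establishes both independence of the decomposition and the bound $\|L_f\|_{(H^1)^*}\leq\|f\|_{BMO_1}$, so one may take $A=1$; the identity $L_f(g)=\int fg\di\mu$ on $H^{1,\infty}_{\rm{fin}}(\mu)$ is then automatic by linearity.

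For (ii), take $L\in(H^1(\mu))^*$. Proposition \ref{coincidono}(ii) with $p=2$ gives $\|L\|_{(H^{1,2})^*}\leq C_2\|L\|_{(H^1)^*}$. For any nonzero $g\in L^2_0(\tilde R)$, the normalized function $\mu(\tilde R)^{-1/2}\|g\|_{L^2}^{-1}g$ is a $(1,2)$-atom, hence $\|g\|_{H^{1,2}}\leq\mu(\tilde R)^{1/2}\|g\|_{L^2}$. In particular $L$ is continuous on each Banach space $X^2_n=L^2_0(\tilde Q_n)$ and therefore on the strict inductive limit $X^2$; Proposition \ref{dualXp} then produces a function $F:\mathcal V\to\mathbb C$, unique up to an additive constant, such that $L(g)=\int Fg\di\mu$ for all $g\in X^2$. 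To bound $\|F\|_{BMO_1}$, fix $\tilde R\in\mathcal R$ and set
\[
\phi:=\mathrm{sgn}(\overline{F-F_{\tilde R}})\chi_{\tilde R},\qquad\phi_{\tilde R}:=\mu(\tilde R)^{-1}\!\int\phi\di\mu,\qquad g_{\tilde R}:=\phi-\phi_{\tilde R}\chi_{\tilde R}.
\]
The function $g_{\tilde R}$ is supported in $\tilde R$, has vanishing integral, and satisfies $|g_{\tilde R}|\leq 2$; therefore $\|g_{\tilde R}\|_{H^{1,2}}\leq 2\mu(\tilde R)$. A direct expansion gives $\int Fg_{\tilde R}\di\mu=\int_{\tilde R}|F-F_{\tilde R}|\di\mu$, so
\[
\frac{1}{\mu(\tilde R)}\int_{\tilde R}|F-F_{\tilde R}|\di\mu=\frac{|L(g_{\tilde R})|}{\mu(\tilde R)}\leq 2\|L\|_{(H^{1,2})^*}\leq 2C_2\|L\|_{(H^1)^*},
\]
yielding the claimed $BMO_1$ estimate (up to an absolute factor absorbed in $C_2$).

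The identity $L=L_F$ on $H^{1,\infty}_{\rm{fin}}(\mu)$ follows at once from $H^{1,\infty}_{\rm{fin}}(\mu)\subset L^2_{c,0}(\mu)=X^2$, and uniqueness modulo constants is obtained by testing $F_1-F_2$ against the functions $m^{-\ell(x)}\chi_{\{x\}}-m^{-\ell(y)}\chi_{\{y\}}\in X^2$, for arbitrary $x,y\in\mathcal V$. The main obstacle is bridging the $L^2$-based representation supplied by Proposition \ref{dualXp} to the $L^1$-oscillation defining $BMO_1(\mu)$; this is precisely what the sign-type test function $g_{\tilde R}$ accomplishes, its $H^{1,2}$-mass being controlled by $\mu(\tilde R)$ thanks to the elementary atom-normalization on $L^2_0(\tilde R)$.
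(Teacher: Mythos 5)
Your part (ii) is essentially the paper's argument: reduce to $(H^{1,2})^*$ via Proposition \ref{coincidono}, restrict to the inductive limit $X^2$, invoke Proposition \ref{dualXp} to get a representing function $F$, and then estimate the oscillation of $F$ on a Calder\'on--Zygmund set by testing against normalized elements of $L^2_0(\tilde R)$. The only difference is that you test against the sign-type function $g_{\tilde R}$ (which correctly yields $\int F g_{\tilde R}\di\mu=\int_{\tilde R}|F-F_{\tilde R}|\di\mu$), whereas the paper uses the $L^2$ self-duality of $L^2_0(\tilde R)$ followed by Cauchy--Schwarz; your route costs an extra factor $2$, so you get $2C_2$ rather than the stated $C_2$, which is harmless. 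The uniqueness argument via the two-point mean-zero test functions is fine.

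Part (i), however, has a genuine gap. You define $L_f(g):=\sum_j\lambda_j\int f a_j\di\mu$ from a chosen atomic decomposition and assert that independence of the decomposition and the bound $\|L_f\|_{(H^1)^*}\le\|f\|_{BMO_1}$ follow from ``a standard density argument.'' The density argument requires knowing that the functional $g\mapsto\int fg\di\mu$ is bounded on $H^{1,\infty}_{\rm{fin}}(\mu)$ \emph{with respect to the full $H^1$ norm}, i.e.\ the infimum of $\sum_j|\lambda_j|$ over all (possibly infinite) decompositions. But your atom estimate only controls $\bigl|\int fg\di\mu\bigr|$ by $\|f\|_{BMO_1}$ times the infimum over \emph{finite} decompositions of $g$, and for $(1,\infty)$-atoms this finite-decomposition quantity is not known to be comparable to $\|g\|_{H^1}$; this is precisely the well-known subtlety that a functional uniformly bounded on $(1,\infty)$-atoms need not extend boundedly to $H^1$. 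For an infinite decomposition you cannot interchange $\int f(\cdot)\di\mu$ with $\sum_j\lambda_j$ when $f$ is merely locally integrable, so the estimate does not transfer; and trying to prove well-definedness by comparing partial sums of two decompositions runs into the same circularity, since it needs exactly the boundedness you are trying to establish. The paper closes this gap by first proving the estimate for $f\in L^\infty(\mu)$ (where dominated convergence legitimizes the term-by-term pairing over arbitrary infinite decompositions, giving the bound against $\|g\|_{H^1}$), then truncating a general $f$ to $f_k$ with $\|f_k\|_{BMO_1}\le C\|f\|_{BMO_1}$ and passing to the limit on each fixed $g\in H^{1,\infty}_{\rm{fin}}(\mu)$ by dominated convergence (using that $g$ is bounded with compact support), and only then extending by density. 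Some such device is needed; note also that this truncation introduces a constant, so the claim $A=1$ is not justified.
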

 \begin{proof}
We first prove (i) in the case when $f\in L^{\infty}(\mu)$. Let $g$ be a function in $H^1(\mu)$ and choose an atomic decomposition $g=\sum_j\lambda_ja_j$ such that $a_j$ are $(1,\infty)$-atoms supported in Calder\'on--Zygmund sets $\tilde R_j$. Since $f\in L^{\infty}(\mu)$ we have 
\begin{equation}\label{e: atdec}
\int f\,g\di\mu=\sum_j\lambda_j\int f\,a_j\di\mu\,.
\end{equation}
For every $j$
$$
\begin{aligned}
\Big| \int f\,a_j\di\mu   \Big|&=\Big| \int_{\tilde R_j} (f-f_{\tilde R_j}) \,a_j\di\mu   \Big|\\
&\leq  \int_{\tilde R_j} |f-f_{\tilde R_j}| \,|a_j|\di\mu\\
&\leq \mu(\tilde R_j)^{-1} \int_{\tilde R_j} |f-f_{\tilde R_j}| \di\mu\\
&\leq \|f\|_{BMO_1}\,.
\end{aligned}
$$
By \eqref{e: atdec} we deduce that
$$
\Big|\int f\,g\di\mu\Big|\leq \sum_j |\lambda_j| \|f\|_{BMO_1}\,.
$$
Taking the infimum over all atomic decompositions of $g$ we get 
\begin{equation}\label{e: flimitatabis}
\Big|\int f\,g\di\mu\Big|\leq \|g\|_{H^1} \|f\|_{BMO_1}\,.
\end{equation}
Let now $f\in BMO_1(\mu)$ be real-valued and define for every $k\in \mathbb N$ the function $f_k:\mathcal{V} \rightarrow \mathbb{R}$ by 
\begin{equation*}
f_k(x) = \begin{cases}
f(x) & \text{if } |f(x)| \leq k \\
k \frac{f(x)}{|f(x)|} & \text{if } |f(x)|>k.
\end{cases}
\end{equation*}
Then $\|f_k\|_{L^\infty} \leq k$ and $\|f_k\|_{BMO_1} \leq C\|f\|_{BMO_1}$. Moreover $|f_k-f|$ tends monotonically to zero when $k$ tends to $\infty$. Let $g\in H^{1,\infty}_{\rm{fin}}(\mu)$. By \eqref{e: flimitatabis} we deduce that
$$
\Big|\int f_k\,g\di\mu\Big|\leq \|g\|_{H^1}\|f_k\|_{BMO_1}\leq C\,\|g\|_{H^1}\|f\|_{BMO_1}\,.
$$
Since $f_kg$ tends to $fg$ everywhere, $g$ is compactly supported and $f$ is integrable on the support of $g$, by the dominated convergence theorem 
\begin{equation}\label{freal}
\Big|\int f\,g\di\mu\Big|=\lim_{k\rightarrow \infty} \Big|\int f_k\,g\di\mu\Big|\leq   C\,\|g\|_{H^1}\|f\|_{BMO_1} \qquad \forall g\in H^{1,\infty}_{\rm{fin}}(\mu)\,.
\end{equation}
Since $H^{1,\infty}_{\rm{fin}}(\mu)$ is dense in $H^1(\mu)$, it follows that the functional $L_{f}$ extends to a bounded functional on $H^1(\mu)$ and there exists a positive constant $A$ such that $\|L_f\|_{(H^1)^*}\leq A\|f\|_{BMO_1}$. 

If $f\in BMO_1(\mu)$ is complex-valued, then we apply \eqref{freal} to $\Re f$ and $\Im f$ and prove (i). 

\smallskip

 We now prove (ii). Let $\{\tilde Q_n\}$ be the sequence of Calder\'on--Zygmund sets constructed in Proposition \ref{l: ricoprimento} and, for any $n\in\mathbb N$, let $X_n^2$ and $X^2$ be the spaces introduced at the end of Section 2. Observe that
$H^{1,2}_{\rm{fin}}(\mu)$ and $X^2$ agree as vector spaces. For any $g\in X_n^2$ the function $\mu(\tilde Q_n)^{-1/2}  \|g\|_{L^2}^{-1} \,g$ is a (1,2)-atom, so that $g$ is in 
$H^{1,2}(\mu)$ and $
\|g\|_{H^{1,2}}\leq \mu (\tilde Q_n)^{1/2}  \|g\|_{L^2}$. Hence $X^2\subset H^{1,2}(\mu)$ and the inclusion is continuous.

Let $L$ be in $ (H^{1}(\mu))^*=(H^{1,2}(\mu))^*$. Hence $L$ lies in the
dual of $X^2$. Then by Proposition \ref{dualXp} there exists a function $f:\mathcal V\rightarrow \mathbb C$ such that
$$
L(g)=\int f\,g\di\mu \qquad \forall g\in X^2\,.
$$
We now show that $f\in BMO_2(\mu)$. Take a Calder\'on--Zygmund set $\tilde R$. For any $g\in X^2$ supported in $\tilde R$ the function $\mu(\tilde R)^{-1/2}  \|g\|_{L^2}^{-1}g$ is a (1,2)-atom. We then have 
$$
\Big|\int f g\di\mu\Big|=|L(g)|\leq \| L\|_{(H^{1,2})^*}\|g\|_{L^2}\mu(\tilde R)^{1/2}\,.
$$
This implies that $f-f_{\tilde R}$ is a function in $L^2_0(\tilde R)$ which represents the restriction of the bounded linear functional $L$ on $L^2_0(\tilde R)$. Hence 
$$
\|f-f_{\tilde R}\|_{L^2(\tilde R)}\leq \|L\|_{(L^2_0(\tilde R))^*}\leq \mu(\tilde R)^{1/2}\|L\|_{(H^{1,2})^*}.
$$
It follows that
$$
\frac{1}{\mu(\tilde R)}\int_{\tilde R}|f-f_{\tilde R}| \di\mu\leq \|L\|_{(H^{1,2})^*}\leq C_2\|L\|_{(H^{1})^*}\,,
$$ 
so that $f\in BMO_1(\mu)$ and $\|f\|_{BMO_1}\leq   C_2\| L\|_{(H^{1})^*}\,.$
 \end{proof}

\begin{corollary}\label{BMOq}
	 For every $q$ in $(1, \infty)$ the space $BMO_q(\mu)$ coincides with $BMO_1(\mu)$ and 
	 $$
\|f\|_{BMO_1}\leq \|f\|_{BMO_q} \leq A C_p\|f\|_{BMO_1} \qquad \forall f\in BMO_q(\mu)\,,
$$ 	 
where $p=q'$, $C_p$ is the constant which appears in Proposition \ref{coincidono}(i) and $A$ is the constant which appears in Theorem \ref{t: duality}(i). 
	 \end{corollary}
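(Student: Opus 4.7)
The first inequality $\|f\|_{BMO_1}\leq\|f\|_{BMO_q}$ is immediate from H\"older's inequality: for every Calder\'on--Zygmund set $\tilde R$,
$$
\frac{1}{\mu(\tilde R)}\int_{\tilde R}|f-f_{\tilde R}|\di\mu\leq \Big(\frac{1}{\mu(\tilde R)}\int_{\tilde R}|f-f_{\tilde R}|^q\di\mu\Big)^{1/q},
$$
so taking the supremum over $\tilde R\in\mathcal R$ gives the claim.

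For the reverse inequality, fix $f\in BMO_1(\mu)$ and a Calder\'on--Zygmund set $\tilde R$. Since $\tilde R$ is a finite set, $f\in L^q(\tilde R)$ and by the $L^p$--$L^q$ duality on $\tilde R$ (with $p=q'$), using that functions $g\in L^p_0(\tilde R)$ annihilate constants,
$$
\Big(\frac{1}{\mu(\tilde R)}\int_{\tilde R}|f-f_{\tilde R}|^q\di\mu\Big)^{1/q}=\mu(\tilde R)^{-1/q}\sup\Big\{\Big|\int f\,g\di\mu\Big|:\,g\in L^p_0(\tilde R),\,\|g\|_{L^p}\leq 1\Big\}.
$$
The plan is to estimate the right-hand side by interpreting such $g$ as elements of $H^1(\mu)$ and then applying the functional $L_f$ provided by Theorem \ref{t: duality}(i).

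Given $g\in L^p_0(\tilde R)$ with $\|g\|_{L^p}\leq 1$, the normalized function $a=\|g\|_{L^p}^{-1}\mu(\tilde R)^{1/p-1}\,g$ is a $(1,p)$-atom, so $\|g\|_{H^{1,p}}\leq \|g\|_{L^p}\,\mu(\tilde R)^{1-1/p}\leq \mu(\tilde R)^{1/q}$. By Proposition \ref{coincidono}(i) this yields $\|g\|_{H^1}=\|g\|_{H^{1,\infty}}\leq C_p\,\mu(\tilde R)^{1/q}$. Moreover, by Proposition \ref{coincidono}(iv) we have $g\in H^{1,\infty}_{\rm fin}(\mu)$, so Theorem \ref{t: duality}(i) applies and gives
$$
\Big|\int f\,g\di\mu\Big|=|L_f(g)|\leq \|L_f\|_{(H^1)^*}\|g\|_{H^1}\leq A\,\|f\|_{BMO_1}\,C_p\,\mu(\tilde R)^{1/q}.
$$
Substituting this bound in the duality formula above yields
$$
\Big(\frac{1}{\mu(\tilde R)}\int_{\tilde R}|f-f_{\tilde R}|^q\di\mu\Big)^{1/q}\leq AC_p\,\|f\|_{BMO_1},
$$
and taking the supremum over $\tilde R\in\mathcal R$ concludes the proof. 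The only delicate point is making sure $g$ is legitimately a test element for $L_f$ in the sense of \eqref{Lf}, which is guaranteed precisely by Proposition \ref{coincidono}(iv); the remainder is essentially bookkeeping of normalizing constants.
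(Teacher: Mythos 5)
Your proof is correct in substance but follows a genuinely more direct route than the paper's. The paper first invokes Proposition \ref{dualXp} to produce an auxiliary function $F$ representing $L_f$ on the inductive-limit space $X^p$, proves $F\in BMO_q(\mu)$, and only then identifies $f$ with $F$ up to a constant on each Calder\'on--Zygmund set in order to transfer the bound back to $f$. You bypass this entirely by observing that each $\tilde R$ is a finite set, so $f\in L^q(\tilde R)$ automatically and the pairing $\int f\,g\di\mu$ with $g\in L^p_0(\tilde R)$ is directly meaningful; combined with Proposition \ref{coincidono}(iv), which places such $g$ in $H^{1,\infty}_{\rm fin}(\mu)$ where \eqref{Lf} applies, this lets you estimate the $L^q$ oscillation of $f$ on $\tilde R$ by duality against $L_f$ without ever constructing $F$. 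The atom normalization and the resulting bound $\|g\|_{H^1}\leq C_p\,\mu(\tilde R)^{1/q}$ are exactly as in the paper, and the use of the representation formula \eqref{Lf} is correctly justified.

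One step needs a correction. The displayed ``duality'' identity is not an equality: testing only against $g\in L^p_0(\tilde R)$ computes the quotient norm $\inf_{c\in\mathbb C}\|f-c\|_{L^q(\tilde R)}$ modulo constants, not $\|f-f_{\tilde R}\|_{L^q(\tilde R)}$; the two coincide only for $q=2$, where the mean is the orthogonal projection onto constants. In general one only has
$$
\inf_{c\in\mathbb C}\|f-c\|_{L^q(\tilde R)}\leq\|f-f_{\tilde R}\|_{L^q(\tilde R)}\leq 2\inf_{c\in\mathbb C}\|f-c\|_{L^q(\tilde R)}\,,
$$
the second inequality following from the triangle inequality and H\"older, as in the proof of Proposition \ref{properties-sharp}(i). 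Consequently your argument yields the constant $2AC_p$ rather than $AC_p$. This does not affect the equivalence of the norms, which is the substantive content of the corollary, and in fairness the paper's own proof contains the same imprecision (it asserts $\|F-F_{\tilde R}\|_{L^q(\tilde R)}\leq\|L_f\|_{(L^p_0(\tilde R))^*}$, which for $q\neq 2$ likewise holds only up to a factor $2$). Still, you should replace the claimed equality by the two-sided inequality above and adjust the final constant accordingly.
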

\begin{proof}
It follows from H\"older's inequality that 
$$
\|f\|_{BMO_1}\leq \|f\|_{BMO_q}\qquad \forall f\in BMO_q(\mu)\,.
$$ 
Take now $f\in BMO_1(\mu)$ and let $L_f$ be the functional on $H^1(\mu)$ such that 
$$
L_f(g)=\int f\,g\di\mu\qquad \forall g\in H^{1,\infty}_{\rm{fin}}(\mu)\,.
$$  
Let $q\in(1, \infty)$ and $p=q'$. 

Let $\{\tilde Q_n\}$ be the sequence of Calder\'on--Zygmund sets constructed in Proposition \ref{l: ricoprimento}. For any $n\in\mathbb N$, let $X_n^p$ and $X^p$ be the spaces introduced in Section 2. Observe that
$H^{1,p}_{\rm{fin}}(\mu)$ and $X^p$ agree as vector spaces. For any $g\in X_n^p$ the function $\mu(\tilde Q_n)^{-1+1/p}  \|g\|_{L^p}^{-1} \,g$ is a $(1,p)$-atom, so that $g$ is in 
$H^{1,p}(\mu)$ and $
\|g\|_{H^{1,p}}\leq \mu (\tilde Q_n)^{1-1/p}  \|g\|_{L^p}$. Hence $X^p \subset H^{1,p}(\mu)$ and the inclusion is continuous.

Hence $L_f$ lies in the
dual of $X^p$. Then by Proposition \ref{dualXp} there exists a function $F:\mathcal V\rightarrow \mathbb C $ such that
$$
L_f(g)=\int F\,g\di\mu \qquad \forall g\in X^p\,.
$$
We now show that $F\in BMO_q(\mu)$. Take a Calder\'on--Zygmund set $\tilde R$. For any $g\in X^p$ supported in $\tilde R$ the function $\mu(\tilde R)^{-1+1/p}  \|g\|_{L^p}^{-1}g$ is a $(1, p)$-atom. We then have 
$$
\Big|\int F\, g\di\mu\Big|=|L(g)|\leq \| L_f\|_{(H^{1,p})^*}\|g\|_{L^p}\,\mu(\tilde R)^{1-1/p}\,.
$$
This implies that $F-F_{\tilde R}$ is a function in $L^q_0(\tilde R)$ which represents the restriction of the bounded linear functional $L_f$ on $L^p_0(\tilde R)$. Hence 
$$
\|F-F_{\tilde R}\|_{L^q(\tilde R)}\leq \|L_f\|_{(L^p_0(\tilde R))^*}\leq \mu(\tilde R)^{1/q}\|L_f\|_{(H^{1,p})^*}.
$$
It follows that
$$
\Big(\frac{1}{\mu(\tilde R)}\int_{\tilde R}|F-F_{\tilde R}|^q \di\mu\Big)^{1/q}\leq \|L_f\|_{(H^{1,p})^*}\leq C_p\|L_f\|_{(H^{1})^*}\leq A C_p\|f\|_{BMO_1}\,,
$$ 
so that $\|F\|_{BMO_q}\leq  A C_p\|f\|_{BMO_1}$, where $C_p$ is the constant which appears in Proposition \ref{coincidono}(i).

Take any Calder\'on--Zygmund set $\tilde R$. Since by Proposition \ref{coincidono}(iii) $L^p_0(\tilde R)\subset H^{1,\infty}_{\rm{fin}}(\mu)$, we have that 
$$
L_f(g)=\int fg\di\mu=\int F\, g\di\mu\qquad \forall g\in L^p_0(\tilde R)\,.
$$
Hence there exists a positive constant $c_{\tilde R}$ such that $f=F-c_{\tilde R}$ on $\tilde R$, so that  
$$
\Big(\frac{1}{\mu(\tilde R)}\int_{\tilde R}|f-f_{\tilde R}|^q \di\mu\Big)^{1/q} =\Big(\frac{1}{\mu(\tilde R)}\int_{\tilde R}|F-F_{\tilde R}|^q \di\mu\Big)^{1/q}\,.
$$ 
In conclusion, $f
\in BMO_q(\mu)$ and $\|f\|_{BMO_q}=\|F\|_{BMO_q}\leq  A C_p\|f\|_{BMO_1} \,.$

\end{proof}

In the sequel we shall denote by $BMO(\mu)$ the space $BMO_1(\mu)$ endowed with the norm $\|\cdot\|_{BMO}=\|\cdot\|_{BMO_1}$.

 \smallskip
 
As a consequence of the duality result, by \cite[Theorem 2]{ATV1}, arguing exactly as in \cite[Section 5]{V} we can deduce the following real interpolation results involving Hardy and BMO spaces. 

\begin{corollary}
Suppose that $1\leq r_1<r<\infty$, $\frac{1}{r}=\frac{1-\theta}{r_1}$, $\theta\in (0,1)$. Then 
$$
[L^{r_1}(\mu),BMO(\mu)]_{\theta,q}=L^r(\mu)\,.
$$
Moreover, if $\frac{1}{r}={1-\theta}$, with $\theta\in (0,1)$, then
$$
[H^{1}(\mu),BMO(\mu)]_{\theta,r}=L^r(\mu)\,.
$$
\end{corollary}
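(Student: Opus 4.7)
The plan is to follow the template of \cite[Section~5]{V}, which in turn adapts the classical Fefferman--Stein interpolation between $L^p$ and $BMO$. Two ingredients from our paper carry the argument: the duality $BMO(\mu)\simeq H^1(\mu)^*$ from Theorem~\ref{t: duality}, and the sharp-maximal inequality $\|f\|_{L^r(\mu)}\lesssim\|f^\#\|_{L^r(\mu)}$ (Theorem~\ref{t: normaLpfsharp}), whose proof ultimately rests on the boundedness of singular integrals with H\"ormander-type kernels from \cite[Theorem~2]{ATV1}.

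For the first identity $[L^{r_1}(\mu),BMO(\mu)]_{\theta,q}=L^r(\mu)$ I would argue two inclusions. For $L^r(\mu)\hookrightarrow[L^{r_1}(\mu),BMO(\mu)]_{\theta,q}$, given $f\in L^r(\mu)$ and $\lambda>0$, the standard truncation $f=f\chi_{\{|f|>\lambda\}}+f\chi_{\{|f|\leq\lambda\}}$ produces a decomposition whose first summand lies in $L^{r_1}(\mu)$ and whose second summand lies in $L^\infty(\mu)\subset BMO(\mu)$ with norm at most $2\lambda$. Fed into the $K$-functional, this reduces the inclusion to the classical real interpolation identity $[L^{r_1},L^\infty]_{\theta,q}=L^{r,q}$. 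For the reverse inclusion, let $f=f_0+f_1$ realize $K(t,f;L^{r_1},BMO)$. The pointwise bounds $(f_1)^\#\leq 2\|f_1\|_{BMO}$ and $(f_0)^\#\lesssim\MaR f_0$, combined with the $L^{r_1}$-boundedness of the maximal function $\MaR$ over \CZ sets, yield $K(t,f^\#;L^{r_1},L^\infty)\lesssim K(t,f;L^{r_1},BMO)$; integrating against the interpolation weight and then applying Theorem~\ref{t: normaLpfsharp} converts this into the desired $\|f\|_{L^r}\lesssim\|f\|_{[L^{r_1},BMO]_{\theta,q}}$.

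For the second identity $[H^1(\mu),BMO(\mu)]_{\theta,r}=L^r(\mu)$, I would invoke Wolff's reiteration theorem with an auxiliary exponent $r_0\in(1,\infty)$. The first part of the corollary provides $[L^{r_0}(\mu),BMO(\mu)]_{\eta,s}=L^s(\mu)$ for suitable parameters, while on the other side the continuous inclusion $H^1(\mu)\hookrightarrow L^1(\mu)$ (immediate from the atomic bound $\|a\|_{L^1}\leq 1$) together with a standard atom-by-atom interpolation gives $[H^1(\mu),L^{r_0}(\mu)]_{\eta',s'}=L^{s'}(\mu)$. Chaining these two identifications through Wolff's theorem delivers the required identity with $\tfrac{1}{r}=1-\theta$.

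The main obstacle is the sharp-maximal inequality Theorem~\ref{t: normaLpfsharp}: in this nondoubling, exponential-growth setting the usual Fefferman--Stein covering arguments fail, and one must work with the family $\mathcal R$ of \CZ sets and exploit the Hebisch--Steger decomposition from \cite{HS} as a substitute for the Whitney/Calder\'on--Zygmund decomposition. Once that estimate is in place, every remaining step -- truncation, $K$-functional manipulation, duality of real interpolation, and Wolff reiteration -- is formally identical to its Euclidean counterpart and proceeds routinely.
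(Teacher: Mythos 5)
Your overall architecture (truncation for the easy inclusion, a sharp-function argument for the hard one, Wolff reiteration for the $H^1$ endpoint) is the classical Fefferman--Stein route, but it is \emph{not} the route the paper takes, and in this setting the hard inclusion as you describe it does not go through. The reverse inclusion $[L^{r_1}(\mu),BMO(\mu)]_{\theta,q}\hookrightarrow L^r(\mu)$ requires, after the $K$-functional manipulation, a Fefferman--Stein inequality $\|f\|_{L^r}\lesssim\|f^{\sharp}\|_{L^r}$ for a general $f$ in the sum space. The only version available here is Theorem \ref{t: normaLpfsharp}, which (a) involves the $p_0$-sharp function with $p_0>1$ rather than $f^{\sharp,1}$, (b) holds only under the a priori hypothesis $f\in L^{p_0}(\mu)$, which is not guaranteed for $f\in L^{r_1}(\mu)+BMO(\mu)$, and (c) is explicitly described in the paper's closing Remark as only a weak substitute: the full inequality $\|f\|_{L^p}\leq C\|f^{\sharp,1}\|_{L^p}$ is stated to be still unknown in this setting, the requisite distributional/good-$\lambda$ inequality being work in progress. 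Moreover your pointwise bound $(f_0)^{\sharp}\lesssim (M(|f_0|^{p_0}))^{1/p_0}$ needs the maximal operator bounded on $L^{r_1/p_0}(\mu)$, which forces $r_1>p_0>1$ and excludes the endpoint $r_1=1$ allowed by the statement. Note also that the sharp-function material appears \emph{after} this corollary in the paper, so the corollary cannot logically rest on it.

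What the paper actually does, in one line, is: take the interpolation identity between $H^1(\mu)$ and Lebesgue spaces already established in \cite[Theorem 2]{ATV1} via the Hebisch--Steger Calder\'on--Zygmund decomposition, dualize it using Theorem \ref{t: duality} (the identification $BMO(\mu)=(H^1(\mu))^*$) together with the duality theorem for the real interpolation method, and then apply reiteration exactly as in \cite[Section 5]{V} to reach the stated range of parameters and the second identity. Your treatment of the second identity (Wolff reiteration chaining $[L^{r_0},BMO]$ with $[H^1,L^{r_0}]$) is consistent with this scheme, but the ingredient $[H^1(\mu),L^{r_0}(\mu)]_{\eta',s'}=L^{s'}(\mu)$ should be quoted from \cite[Theorem 2]{ATV1} rather than rederived by ``standard atom-by-atom interpolation'', which in this nondoubling, exponentially growing space is precisely the nontrivial point. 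To repair your first part, replace the sharp-function step by the same duality argument.
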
 
	
As a consequence of the duality result and of \cite[Theorem 3]{ATV1} we deduce that integral operators whose kernels satisfy a suitable integral H\"ormander condition are bounded from $L^{\infty}(\mu)$ to $BMO(\mu)$. 

\begin{corollary}\label{TeolimH1L1}
Let $T$ be a linear operator which is bounded on $L^2(\mu)$ and admits a locally integrable kernel $K$ off the diagonal that satisfies the condition 
$$
\begin{aligned}
\sup_{\tilde R}\sup_{y,\,z\in \tilde R}   \int_{(\tilde R^*)^c}|K(y,x)-K(z,x)| \,\di\mu (x) &<\infty\,,
\end{aligned}
$$ 
where the supremum is taken over all Calder\'on-Zygmund sets $\tilde R$ and $\tilde R^*$ is defined as in Definition~\ref{tildeQ*}. Then $T$ extends to a bounded operator from $L^{\infty}(\mu)$ to $BMO(\mu)$.

\end{corollary}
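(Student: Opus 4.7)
The plan is to deduce the $L^\infty(\mu)\to BMO(\mu)$ boundedness of $T$ from the $H^1(\mu)\to L^1(\mu)$ boundedness of its adjoint, combining the duality of Theorem~\ref{t: duality} with \cite[Theorem 3]{ATV1}.

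The first step is to pass to the $L^2(\mu)$-adjoint $T^*$, whose off-diagonal kernel is $K^*(x,y)=\overline{K(y,x)}$. The pointwise identity $|K^*(x,y)-K^*(x,z)|=|K(y,x)-K(z,x)|$ shows that the hypothesis transfers verbatim to the integral H\"ormander condition
$$
\sup_{\tilde R}\sup_{y,z\in\tilde R}\int_{(\tilde R^*)^c} |K^*(x,y)-K^*(x,z)|\di\mu(x)<\infty
$$
on the input variable of $K^*$, which is the form required by \cite[Theorem 3]{ATV1}. Applying that result to $T^*$ yields a bounded extension $T^*\colon H^1(\mu)\to L^1(\mu)$.

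The second step is to define $Tf$ for $f\in L^\infty(\mu)$ by duality. I would introduce the linear functional $L_f(g):=\int_{\mathcal V}\overline{f}\,(T^*g)\di\mu$ on the dense subspace $H^{1,\infty}_{\rm{fin}}(\mu)$ of $H^1(\mu)$; the trivial estimate $|L_f(g)|\le \|f\|_{L^\infty}\,\|T^*g\|_{L^1}\le C\,\|f\|_{L^\infty}\,\|g\|_{H^1}$ shows that $L_f$ extends to a bounded functional on $H^1(\mu)$. Theorem~\ref{t: duality}(ii) then supplies a function $Tf\in BMO(\mu)$, unique modulo constants, such that $L_f(g)=\int(Tf)\,g\di\mu$ for every $g\in H^{1,\infty}_{\rm{fin}}(\mu)$, together with the bound $\|Tf\|_{BMO}\le C_2\,\|L_f\|_{(H^1)^*}\le C\,\|f\|_{L^\infty}$.

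The step I expect to require the most care is verifying that this duality construction coincides with the original action of $T$ on $L^\infty(\mu)\cap L^2(\mu)$, so that one genuinely obtains an extension and not a new operator. On this intersection, the identity $\int\overline{f}\,(T^*g)\di\mu=\int\overline{Tf}\,g\di\mu$ already holds for every $g\in H^{1,\infty}_{\rm{fin}}(\mu)\subset L^2(\mu)$ by the definition of the $L^2$-adjoint, and this forces the $BMO$ representative produced by Theorem~\ref{t: duality}(ii) to agree with the original $Tf$ up to an additive constant. The main bookkeeping issue will be keeping the complex conjugations and the pairing convention of Theorem~\ref{t: duality} consistent throughout the argument; once that is settled, the construction delivers the bounded extension $T\colon L^\infty(\mu)\to BMO(\mu)$ as claimed.
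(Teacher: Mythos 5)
Your argument is correct and is exactly the route the paper intends: the hypothesis is precisely the integral H\"ormander condition for the kernel of the transposed operator, \cite[Theorem 3]{ATV1} then gives its $H^1(\mu)\to L^1(\mu)$ boundedness, and Theorem~\ref{t: duality} converts this by duality into the $L^{\infty}(\mu)\to BMO(\mu)$ boundedness of $T$. The only cosmetic point is that the pairing in Theorem~\ref{t: duality} is bilinear, $\int fg\di\mu$, so it is slightly cleaner to work with the transpose $T^t$ (kernel $K^t(x,y)=K(y,x)$, no conjugation) rather than the $L^2$-adjoint, which disposes of the conjugation bookkeeping you flag at the end.
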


	\section{The sharp maximal function}

In this section we introduce the sharp maximal function associated with the family of Calder\'on--Zygmund sets and investigate some of its properties. This sharp maximal function is related with the definition of the $BMO$-space and might be useful to study interpolation properties of such space. 
	
	\begin{definition}
Let $q\in [1,\infty)$. For every function $f$ in $L^q_{\rm{loc}}(\mu)$ its sharp maximal function $f^{\sharp,q}$ is defined by 
$$f^{\sharp,q}(x)=\sup_{\tilde R\in\mathcal R, x\in\tilde R}  \Big(  \frac{1}{\mu(\tilde R)}\int_{\tilde R}|f-f_{\tilde R}|^q \di\mu\Big)^{1/q}  \qquad \forall x\in \mathcal V\,.$$
\end{definition}

Notice that $
\|f^{\sharp,q}\|_{L^\infty}=\|f\|_{BMO_q}
$ for every function $f\in BMO(\mu)$.

\begin{proposition} \label{properties-sharp}
Let $q\in [1,\infty)$ and $f,g\in BMO(\mu)$. The following hold:
\begin{itemize}
\item[(i)] 
$$\frac{1}{2}f^{\sharp,q}(x)\leq  \sup_{\tilde R\in\mathcal R, x\in\tilde R} \inf_{c\in\mathbb C}  \Big(  \frac{1}{\mu(\tilde R)}\int_{\tilde R}|f-c|^q \di\mu\Big)^{1/q} \leq f^{\sharp,q}(x)\qquad \forall x\in\mathcal V\,;$$
\item[(ii)] \,for every $x\in\mathcal V$, $|f|^{\sharp,q}(x)\leq 2f^{\sharp, q}(x)$. Then $|f|\in BMO(\mu)$ and $$\| \,|f|\,\|_{BMO_q}\leq 2\|f\|_{BMO_q}\,;$$
\item[(iii)] $(f+g)^{\sharp, q}(x)\leq f^{\sharp,q}(x)+g^{\sharp,q}(x)$ for every $x\in\mathcal V$;
\item[(iv)] there exists a positive constant $C$ such that if $f$ and $g$ have real values, then for every $x\in\mathcal V$, 
$$
[\max(f,g)]^{\sharp,q}(x)\leq C\,\big( |f|^{\sharp,q}(x)+ |g|^{\sharp,q}(x)\big),\qquad [\min(f,g)]^{\sharp,q}(x)\leq C\, \big(|f|^{\sharp,q}(x)+ |g|^{\sharp,q}(x)\big)\,.
$$
\end{itemize}
\end{proposition}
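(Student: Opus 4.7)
My plan is to prove (i) directly from the definition via triangle inequality, Jensen, and Minkowski, and then to derive (ii), (iii), and (iv) from it, with (iv) combining (ii) and (iii) through the algebraic $\max$/$\min$ identity. For (i), the right inequality is immediate because $c=f_{\tilde R}$ is a valid competitor in the infimum. For the left, I would start from $|f-f_{\tilde R}|\leq |f-c|+|c-f_{\tilde R}|$, note that $|c-f_{\tilde R}|=\bigl|(c-f)_{\tilde R}\bigr|\leq (\mu(\tilde R)^{-1}\int_{\tilde R}|c-f|^q\di\mu)^{1/q}$ by Jensen's inequality, and apply Minkowski's inequality on $L^q(\tilde R,\di\mu/\mu(\tilde R))$ to obtain
$$
\Bigl(\frac{1}{\mu(\tilde R)}\int_{\tilde R}|f-f_{\tilde R}|^q\di\mu\Bigr)^{1/q}\leq 2\,\Bigl(\frac{1}{\mu(\tilde R)}\int_{\tilde R}|f-c|^q\di\mu\Bigr)^{1/q}.
$$
Taking the supremum over $\tilde R\ni x$ on the left and the infimum over $c\in\mathbb C$ on the right yields the asserted factor~$2$.

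For (ii), I would apply (i) to $|f|$ with the competitor $c=|f_{\tilde R}|$: the pointwise estimate $\bigl||f(y)|-|f_{\tilde R}|\bigr|\leq |f(y)-f_{\tilde R}|$ immediately gives $|f|^{\sharp,q}(x)\leq 2f^{\sharp,q}(x)$, and the corresponding $BMO_q$-norm bound follows by passing to $L^\infty$ in $x$. For (iii), I would use linearity of the average, $(f+g)_{\tilde R}=f_{\tilde R}+g_{\tilde R}$, so that $|(f+g)-(f+g)_{\tilde R}|\leq |f-f_{\tilde R}|+|g-g_{\tilde R}|$ pointwise, and conclude by Minkowski's inequality in $L^q(\tilde R,\di\mu/\mu(\tilde R))$ followed by the supremum over $\tilde R\ni x$.

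For (iv), I would invoke the identities $\max(f,g)=\frac{1}{2}(f+g+|f-g|)$ and $\min(f,g)=\frac{1}{2}(f+g-|f-g|)$, valid for real-valued $f,g$. Applying (iii) to the right-hand side and then (ii) combined with (iii) to bound $|f-g|^{\sharp,q}\leq 2(f-g)^{\sharp,q}\leq 2(f^{\sharp,q}+g^{\sharp,q})$, I obtain $[\max(f,g)]^{\sharp,q}(x)\leq \frac{3}{2}(f^{\sharp,q}(x)+g^{\sharp,q}(x))$, and analogously for $\min$. The main subtle point in (iv) is the form of the right-hand side: the literal statement has $|f|^{\sharp,q}+|g|^{\sharp,q}$, but by (ii) one only has $|f|^{\sharp,q}\leq 2f^{\sharp,q}$ and not the reverse (a sign-changing $f$ with constant $|f|$ provides a counterexample), so the natural argument yields the bound with $f^{\sharp,q}+g^{\sharp,q}$ on the right, which I take to be the intended formulation.
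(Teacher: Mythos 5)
Your proof is correct and follows essentially the same route as the paper's: (i) via the triangle inequality together with Jensen's inequality for the term $|f_{\tilde R}-c|$ and an $\varepsilon$-almost-minimizing constant, (ii) by applying (i) to $|f|$ with the competitor $|f_{\tilde R}|$ and the bound $\bigl||f|-|f_{\tilde R}|\bigr|\leq|f-f_{\tilde R}|$, (iii) from linearity of the average, and (iv) from the $\max/\min$ identities combined with (ii) and (iii). Your caveat about (iv) is well taken: the paper's own argument likewise only yields the bound with $f^{\sharp,q}+g^{\sharp,q}$ on the right, and the literal statement with $|f|^{\sharp,q}+|g|^{\sharp,q}$ is false (take $g\equiv 0$ and $f$ with values $\pm1$ changing sign inside some Calder\'on--Zygmund set, so that $|f|^{\sharp,q}\equiv|g|^{\sharp,q}\equiv 0$ while $[\max(f,0)]^{\sharp,q}>0$ there), so the right-hand side should indeed read $C\,\bigl(f^{\sharp,q}(x)+g^{\sharp,q}(x)\bigr)$.
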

\begin{proof}
We first prove (i). Given $x\in\mathcal V$ and $\tilde R\in\mathcal R$ which contains $x$ for every $\varepsilon>0$ we choose a constant $c_{\tilde R}$ such that 
$$
   \Big(  \frac{1}{\mu(\tilde R)}\int_{\tilde R}|f-c_{\tilde R}|^q \di\mu\Big)^{1/q}
\leq \inf_{c\in\mathbb C}  \Big(  \frac{1}{\mu(\tilde R)}\int_{\tilde R}|f-c|^q \di\mu\Big)^{1/q}+\varepsilon 
$$
Then, by applying H\"older's inequality, we obtain 
$$
\begin{aligned}
\Big( \frac{1}{\mu(\tilde R)}\int_{\tilde R} |f-f_{\tilde R}|^q\di\mu\Big)^{1/q}&\leq \frac{1}{\mu(\tilde R)^{1/q}} \|f-c_{\tilde R}\|_{L^q(\tilde R)}+ \frac{1}{\mu(\tilde R)^{1/q}} \|f_{\tilde R}-c_{\tilde R}\|_{L^q(\tilde R)} \\
&\leq  2\inf_{c\in\mathbb C}  \Big(  \frac{1}{\mu(\tilde R)}\int_{\tilde R}|f-c|^q \di\mu\Big)^{1/q}+2\varepsilon \,.
\end{aligned}
$$
Since $\varepsilon>0$ is arbitrary and taking the supremum over all sets $\tilde R\in\mathcal R$ we obtain the first inequality in (i); the second one is immediate. 

Let us now prove (ii). For every $\tilde R\in\mathcal R$, we have
$$
\begin{aligned}
|f|^{\sharp,q}(x)&\leq 2\sup_{\tilde R\in\mathcal R, x\in\tilde R}\inf_{c\in\mathbb C}\Big( \frac{1}{\mu(\tilde R)}\int_{\tilde R} \big| |f|-c\big|^q\di\mu\Big)^{1/q}\\
&\leq 2\sup_{\tilde R\in\mathcal R, x\in\tilde R} \Big( \frac{1}{\mu(\tilde R)}\int_{\tilde R} \big| |f|-|f_{\tilde R}|\big| ^q\di\mu\Big)^{1/q}\\
&\leq 2\sup_{\tilde R\in\mathcal R, x\in\tilde R} \Big( \frac{1}{\mu(\tilde R)}\int_{\tilde R}  |f-f_{\tilde R}|^q\di\mu\Big)^{1/q}\\
&= 2f^{\sharp,q}(x)\,.
\end{aligned}
$$ 
Property (iii) is an immediate consequence of the definition of the sharp maximal function. 

Property (iv) follows from (ii) and the fact that 
$$
\max(f,g)=\frac{f+g+|f-g|}{2}\qquad{\rm{and}}\qquad  \min(f,g)=\frac{f+g-|f-g|}{2}\,.
$$
\end{proof}
Notice that, as a consequence of Proposition \ref{properties-sharp} (iv), the set of real valued functions in $BMO(\mu)$ is a lattice. 

In the following result, we explain how the duality of $H^1(\mu)$ with $BMO(\mu)$ can be quantitatively expressed in terms of the sharp maximal function. 
\begin{proposition}\label{intfg}
Let $q\in (1,\infty)$. There exists a positive constant $C$ such that for every $f\in L^{\infty}(\mu)$ and $g\in H^1(\mu)\cap L^q(\mu)$ 
$$
\Big|\int f\,g\,\di\mu\Big|\leq C\,  \int f^{\sharp,q'} (M(|g|^q))^{1/q}\di\mu\,,
$$
where 
$$
M(|g|^q)(x)=\sup_{x\in R}\frac{1}{\mu(R)}\int_R|g|^q\di\mu\qquad \forall x\in\mathcal V\,,
$$
where the supremum is taken over all admissible trapezoids that contain $x$. 
\end{proposition}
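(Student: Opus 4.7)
The plan is to imitate the classical Fefferman--Stein bilinear estimate for the sharp function, adapted to the Calder\'on--Zygmund structure on the weighted tree. The idea is to atomically decompose $g$ using the level sets of $M(|g|^q)$, estimate each atom against $f$ via H\"older's inequality exploiting the vanishing mean, and sum the result by a layer-cake / geometric-series argument.

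\emph{Step 1 (CZ decomposition of $|g|^q$).} Write $q' = q/(q-1)$ and set $\Omega_k = \{x \in \mathcal V :  M(|g|^q)(x) > 2^k\}$ for each $k \in \ZZ$. Since $|g|^q \in L^1(\mu)$ and $M$ is of weak type $(1,1)$ on $(\mathcal V, d, \mu)$ in the sense of \cite{HS, ATV1}, each $\Omega_k$ has finite measure. I will apply the Calder\'on--Zygmund decomposition at height $2^k$ to $|g|^q$ to obtain a pairwise disjoint family $\{R_{k,j}\}_j$ of admissible trapezoids such that, up to a null set, $\Omega_k = \bigcup_j R_{k,j}$, $|g|^q \leq 2^k$ outside $\Omega_k$, and
$$
\frac{1}{\mu(R_{k,j})}\int_{R_{k,j}} |g|^q\,\di\mu \leq C\,2^k.
$$

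\emph{Step 2 (Atomic decomposition).} For each $(k,j)$ I set $E_{k,j} = R_{k,j}\setminus\Omega_{k+1}$, let $m_{k,j}$ be the mean of $g$ on $E_{k,j}$, and form $b_{k,j} = (g - m_{k,j})\chi_{E_{k,j}}$. Then $b_{k,j}$ has vanishing mean, is supported in $\tilde R_{k,j}$, and $\|b_{k,j}\|_{L^q} \leq C\,2^{k/q}\mu(R_{k,j})^{1/q}$. Setting $\lambda_{k,j} = C\,2^{k/q}\mu(R_{k,j})$ and $a_{k,j} = b_{k,j}/\lambda_{k,j}$ produces a $(1,q)$-atom, since Proposition \ref{mutildeR}(i) gives $\mu(\tilde R_{k,j}) \leq 4\mu(R_{k,j})$ and hence $\|a_{k,j}\|_{L^q} \leq \mu(\tilde R_{k,j})^{1/q-1}$. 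Because $g \in L^q(\mu)$, the telescoping identity $g = \sum_{k,j}\lambda_{k,j}\,a_{k,j}$ converges pointwise and in $L^q(\mu)$; combined with $f \in L^\infty(\mu)$ this justifies exchanging sum and integral below.

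\emph{Step 3 (Per-atom estimate and summation).} Since $\int a_{k,j}\,\di\mu = 0$ and $\supp(a_{k,j}) \subseteq \tilde R_{k,j}$, H\"older's inequality gives
$$
\Big|\int f\,a_{k,j}\,\di\mu\Big| = \Big|\int_{\tilde R_{k,j}}(f - f_{\tilde R_{k,j}})\,a_{k,j}\,\di\mu\Big| \leq \|a_{k,j}\|_{L^q}\,\|f - f_{\tilde R_{k,j}}\|_{L^{q'}(\tilde R_{k,j})} \leq \inf_{y \in \tilde R_{k,j}} f^{\sharp,q'}(y).
$$
Summing in $j$, the inclusion $R_{k,j} \subseteq \tilde R_{k,j}$ gives $\mu(R_{k,j})\inf_{y\in\tilde R_{k,j}} f^{\sharp,q'}(y) \leq \int_{R_{k,j}} f^{\sharp,q'}\,\di\mu$, so disjointness yields $\sum_j |\lambda_{k,j}|\inf_{y\in\tilde R_{k,j}} f^{\sharp,q'}(y) \leq C\,2^{k/q}\int_{\Omega_k} f^{\sharp,q'}\,\di\mu$. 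A pointwise geometric-series bound $\sum_k 2^{k/q}\chi_{\Omega_k}(x) \leq C_q(M(|g|^q)(x))^{1/q}$, which follows from $2^k < M(|g|^q)(x)$ on $\Omega_k$, delivers the claim after summation in $k$.

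\emph{Main obstacle.} The delicate part is Steps 1--2: realizing a Calder\'on--Zygmund stopping-time decomposition in the non-doubling, non-dyadic weighted-tree setting so that the chosen trapezoids $R_{k,j}$ are disjoint, cover $\Omega_k$, and have controlled $L^q$-averages of $g$, and then checking that their envelopes produce $(1,q)$-atoms with the stated normalization. This rests on the CZ machinery of \cite{HS,ATV1} together with the geometric control provided by Proposition \ref{mutildeR}. Once these ingredients are in place, Step 3 collapses to H\"older's inequality followed by a routine layer-cake summation.
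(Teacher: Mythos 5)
Your Steps 1 and 3 follow the paper's route (Calder\'on--Zygmund decomposition on the level sets of $M(|g|^q)$, the H\"older/vanishing-mean estimate against $f^{\sharp,q'}$, and the geometric summation $\sum_{2^k<M(|g|^q)(x)}2^{k/q}\lesssim (M(|g|^q)(x))^{1/q}$), but Step 2 contains a genuine gap: the claimed identity $g=\sum_{k,j}\lambda_{k,j}a_{k,j}$ is false. Since the sets $E_{k,j}=R_{k,j}\setminus\Omega_{k+1}$ are pairwise disjoint and (modulo the covering issue below) partition $\mathcal V$ up to a null set, your sum is
$$
\sum_{k,j}b_{k,j}=\sum_{k,j}(g-m_{k,j})\chi_{E_{k,j}}=g-\sum_{k,j}m_{k,j}\chi_{E_{k,j}}\,,
$$
so nothing telescopes and the residual $\sum_{k,j}m_{k,j}\chi_{E_{k,j}}$ is left over. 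That residual has no vanishing-mean structure on the individual trapezoids, so its pairing with a bounded $f$ cannot be absorbed into $\int f^{\sharp,q'}(M(|g|^q))^{1/q}\di\mu$ (the sharp function annihilates constants, but not this piece). Two secondary problems compound this: (a) the decomposition available here (from \cite[Lemma 1]{ATV1}, as used in the paper) only gives $\bigcup_j R_{k,j}\subset\Omega_k\subset\bigcup_j\tilde R_{k,j}$, not an exact disjoint tiling of $\Omega_k$ by the $R_{k,j}$, so the $E_{k,j}$ need not cover $\Omega_k\setminus\Omega_{k+1}$; (b) your bound $\|b_{k,j}\|_{L^q}\le C2^{k/q}\mu(R_{k,j})^{1/q}$ requires $|m_{k,j}|\lesssim 2^{k/q}$, which in turn needs $\mu(E_{k,j})\gtrsim\mu(R_{k,j})$ --- a nontrivial fact you do not verify.

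The paper avoids all of this by performing, at each height $2^{jq}$, a genuine good/bad splitting $g=g^j+b^j$ with $|g^j|\le C2^j$ and $b^j=\sum_k b^j_k$, where each $b^j_k$ is already supported in $\tilde R^j_k$ with vanishing integral and $\|b^j_k\|_{L^q}\le C2^j\mu(\tilde R^j_k)^{1/q}$. The true telescoping then happens at the level of the good parts: $g=\lim_{N\to\infty}\sum_{j\le N}(g^{j+1}-g^j)=\lim_{N\to\infty}\sum_{j\le N}(b^j-b^{j+1})$, using $g^j\to0$ uniformly as $j\to-\infty$ and $\|b^j\|_{H^1}\le C2^{j(1-q)}\|g\|_{L^q}^q\to0$ as $j\to+\infty$ (this is where the hypothesis $g\in L^q(\mu)$ enters). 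One then estimates $\int f\,b^j_k$ atom by atom exactly as in your Step 3. To repair your argument you should either adopt this good/bad telescoping, or keep track of the residual means and show they recombine into mean-zero pieces across consecutive generations --- the latter is essentially the Calder\'on--Zygmund/atomic machinery you were trying to shortcut.
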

\begin{proof}
Take $g\in H^1(\mu)\cap L^q(\mu)$. For every $j\in\mathbb Z$ arguing as in \cite[Lemma 1]{ATV1} we can construct a family of disjoint trapezoids $R_{k}^j$, a function $g^j$ and functions $b^j_k$ such that
\begin{itemize}
\item $\bigcup_kR^j_k\subset \Omega_j=\{x\in\mathcal V: M(|g|^q)>2^{jq}\}\subset \bigcup_k\tilde R^j_k$;
\item $g=g^j+\sum_kb^j_k=g^j+b^j$;
\item $|g^j|\leq C2^j$;
\item $b^j_k$ is supported in $\tilde R^j_k$, has vanishing integral and $\|b^j_k\|_{L^q}\leq C2^j\mu(\tilde R^j_k)^{1/q}$.
\end{itemize}
These facts implies that $g^j$ tends to $0$ uniformly when $j\rightarrow -\infty$ and $\|b^j\|_{H^1}\leq C 2^{j(1-q)}\|g\|^q_{L^q}$, hence $b^j$ tends to $0$ in $H^1(\mu)$ when $j\rightarrow +\infty$. Thus
\begin{equation}\label{g=lim}
g=\lim_{N\rightarrow +\infty}\sum_{j=-\infty}^N(g^{j+1}-g^j)=\lim_{N\rightarrow +\infty}\sum_{j=-\infty}^N(b^{j}-b^{j+1})\,.
\end{equation}
Notice that we can write $b^j_k=\lambda^j_k a^j_k$, where $\lambda^j_k=C2^j\mu(\tilde R^j_k)$ and $a^j_k$ is a $(1,q)$-atom supported in $\tilde R^j_k$.

Take now $f\in L^{\infty}(\mu)$. We have 
$$
\Big|\int f\,a^j_k\di\mu\Big|\leq \Big|\int [f-f_{\tilde R^j_k}]\,a^j_k\di\mu\Big|\leq \|a^j_k\|_{L^q} \|f-f_{\tilde R^j_k}\|_{L^{q'}(\tilde R^j_k)}\leq    \mu(\tilde R^j_k)^{-1+1/q}\|f-f_{\tilde R^j_k}\|_{L^{q'}(\tilde R^j_k)}\leq  f^{\sharp,q'}(x)\,,
$$
for every $x\in \tilde R^j_k$. Hence
$$
\Big|\int f\,a^j_k\di\mu\Big|\leq \frac{1}{\mu(R^j_k)}\int_{R^j_k} f^{\sharp,q'}\di\mu\,.
$$
Using \eqref{g=lim} and Proposition \ref{mutildeR}, it follows that 
$$
\begin{aligned}
\Big|\int f\,g\di\mu\Big|&\leq \sum_{j,k}\lambda^j_{k} \Big|\int f\,a^j_k\di\mu\Big| +\sum_{j,\ell}\lambda^{j+1}_{\ell} \Big|\int f\,a^{j+1}_{\ell}\di\mu\Big| \\
&\leq C \sum_{j,k}2^j \mu(\tilde R^j_k) \frac{1}{\mu(R^j_k)}\int_{R^j_k} f^{\sharp,q'}\di\mu+C \sum_{j,\ell}2^{j+1} \mu(\tilde R^{j+1}_\ell) \frac{1}{\mu(R^{j+1}_\ell)}\int_{R^{j+1}_\ell} f^{\sharp,q'}\di\mu\\
&\leq C\sum_j \int_{\bigcup_k R^j_k} 2^j\,f^{\sharp,q'}\di\mu +C\sum_j \int_{\bigcup_\ell R^{j+1}_\ell} 2^{j+1}\,f^{\sharp,q'}\di\mu \\
&\leq C \sum_j \int_{ \Omega_j } 2^j\,f^{\sharp,q'}\di\mu +C\sum_j \int_{ \Omega_{j+1}   } 2^{j+1}\,f^{\sharp,q'}\di\mu \\
&\leq C\int_{\mathcal V} f^{\sharp,q'}(x) \sum_{2^j<(M|g|^q)^{1/q}(x)} 2^j\di\mu(x)+ C\int_{\mathcal V} f^{\sharp,q'}(x) \sum_{2^{j+1}<(M|g|^q)^{1/q}(x)} 2^{j+1} \di\mu(x)\\
&\leq C\int_{\mathcal V} f^{\sharp,q'}(x) (M|g|^q)^{1/q}(x) \di\mu(x)\,,
\end{aligned}
 $$
as required.
\end{proof}

The following theorem can be thought as a weak version of the classical $L^p$-inequality involving the sharp maximal function in the Euclidean setting (see \cite[Theorem 2, \S 2, Ch. IV]{S}). 
\begin{theorem}\label{t: normaLpfsharp}
Let $p\in (1,\infty)$ and $p_0\in (1,p)$. There exists a positive constant $C$ such that 
\begin{equation}
\label{weaksharp}
\|f\|_{L^p}\leq C\, \|f^{\sharp,p_0}\|_{L^p}\qquad \forall f\in L^{p_0}(\mu)\,.
\end{equation}
\end{theorem}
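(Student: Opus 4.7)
The plan is to follow the classical Fefferman--Stein good-$\lambda$ strategy, adapted to the Calder\'on--Zygmund framework on $(\mathcal V,d,\mu)$ developed in \cite{HS}. A direct attempt to derive \eqref{weaksharp} from Proposition \ref{intfg} by H\"older's inequality fails: in order to make $f^{\sharp,p_0}$ (or a smaller sharp function) appear on the right one has to take $q\geq p_0'$ in Proposition \ref{intfg}, while H\"older's inequality simultaneously requires $q<p'$ so that $(M|g|^q)^{1/q}\in L^{p'}(\mu)$; these two conditions together amount to $p<p_0$, opposite to our hypothesis. So I will argue on the distribution function of a suitable maximal operator instead. Let $Mf(x)=\sup_{x\in R}\mu(R)^{-1}\int_R|f|\di\mu$, the supremum being over admissible trapezoids $R\ni x$. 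Since every singleton is a degenerate admissible trapezoid, $|f|\leq Mf$ pointwise; moreover the Calder\'on--Zygmund theory of \cite{HS} ensures that $M$ is of weak-type $(1,1)$ and strong-type $(r,r)$ for every $r>1$. It therefore suffices to prove $\|Mf\|_{L^p}\leq C\|f^{\sharp,p_0}\|_{L^p}$.

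The heart of the argument is the good-$\lambda$ inequality: there exist absolute constants $\beta>1$ and $C_0>0$ such that for every $\lambda>0$ and $\gamma\in(0,1)$,
\begin{equation*}
\mu\big(\{Mf>\beta\lambda,\,f^{\sharp,p_0}\leq\gamma\lambda\}\big)\leq C_0\,\gamma^{p_0}\,\mu\big(\{Mf>\lambda\}\big).
\end{equation*}
To establish this, I would run a Calder\'on--Zygmund stopping-time construction on $\Omega_\lambda=\{Mf>\lambda\}$, obtaining a disjoint family $\{R_k\}$ of maximal admissible trapezoids whose envelopes cover $\Omega_\lambda$ and such that the mean of $|f|$ over (a suitable enlargement of) $\tilde R_k$ is controlled by $\lambda$. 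Splitting $f=h+b_k$ with $b_k=(f-f_{\tilde R_k^*})\chi_{\tilde R_k^*}$, the locality of $M$ together with $|f_{\tilde R_k^*}|\lesssim\lambda$ gives $Mh\leq C\lambda$ on $\tilde R_k$, so $Mf(x)>\beta\lambda$ with $\beta$ large forces $Mb_k(x)>\lambda$. The weak-$(p_0,p_0)$ inequality then bounds $\mu\{x\in\tilde R_k:Mb_k(x)>\lambda\}$ by $C\lambda^{-p_0}\int_{\tilde R_k^*}|f-f_{\tilde R_k^*}|^{p_0}\di\mu$. Enclosing $\tilde R_k^*$ in a Calder\'on--Zygmund set $\tilde T_k\supset\tilde R_k$ of comparable measure (Proposition \ref{mutildeR} together with \eqref{mutildeQ*}) and evaluating $f^{\sharp,p_0}$ at any $y\in\tilde R_k$ with $f^{\sharp,p_0}(y)\leq\gamma\lambda$ (otherwise $\tilde R_k$ contributes nothing) yields
\begin{equation*}
\int_{\tilde R_k^*}|f-f_{\tilde R_k^*}|^{p_0}\di\mu\leq C\mu(\tilde T_k)(\gamma\lambda)^{p_0}\leq C\mu(\tilde R_k)(\gamma\lambda)^{p_0}.
\end{equation*}
Summation in $k$, together with $\mu(\tilde R_k)\leq 4\mu(R_k)$ and the disjointness of the $R_k$, closes the good-$\lambda$ estimate.

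Having the good-$\lambda$ inequality, plugging it into $\|Mf\|_{L^p}^p=p\int_0^\infty t^{p-1}\mu(\{Mf>t\})\di t$ after the substitution $t=\beta\lambda$ and splitting the level set produces
\begin{equation*}
\|Mf\|_{L^p}^p\leq \beta^p C_0\gamma^{p_0}\|Mf\|_{L^p}^p+\beta^p\gamma^{-p}\|f^{\sharp,p_0}\|_{L^p}^p,
\end{equation*}
and choosing $\gamma$ so small that $\beta^pC_0\gamma^{p_0}<1/2$ absorbs the first term on the right, whence \eqref{weaksharp} via $|f|\leq Mf$. To avoid circular reasoning when $\|Mf\|_{L^p}$ is a priori infinite I would instead run the argument on the truncated integral $I_K=p\int_0^K t^{p-1}\mu(\{Mf>t\})\di t$, whose finiteness for each $K>0$ follows from the weak-$(p_0,p_0)$ bound $\mu(\{Mf>t\})\leq Ct^{-p_0}\|f\|_{L^{p_0}}^{p_0}$ -- this estimate is integrable against $t^{p-1}$ near $t=0$ precisely because $p>p_0$ -- and then let $K\to\infty$. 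The main technical obstacle is the good-$\lambda$ step: unlike in the Euclidean setting, the enlargement $\tilde R_k^*$ is neither a trapezoid nor a Calder\'on--Zygmund set, so both the packaging of $\tilde R_k^*$ inside a CZ set of comparable measure and the estimate $Mh\lesssim\lambda$ on $\tilde R_k$ require a careful exploitation of the trapezoidal geometry introduced in Section \ref{notation}.
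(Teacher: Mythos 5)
Your route is genuinely different from the paper's, which proves \eqref{weaksharp} by duality: the authors truncate $f$ to $f_k\in L^\infty(\mu)$, replace an arbitrary $g$ in the unit ball of $L^{p'}(\mu)\cap L^{p_0'}(\mu)$ by compactly supported mean-zero approximations $g_k\in H^1(\mu)$, apply Proposition \ref{intfg} with exponent $q_0=p_0'$, and conclude by H\"older's inequality together with the boundedness of the maximal operator on $L^{q/q_0}(\mu)$. Your opening observation --- that the constraints $q\geq p_0'$ (needed to produce $f^{\sharp,p_0}$) and $q<p'$ (needed to keep $(M|g|^q)^{1/q}$ in $L^{p'}(\mu)$) are incompatible when $p_0<p$ --- is therefore not merely a reason to abandon that route: it is a substantive objection to the paper's own proof as written, whose last step invokes boundedness of $M$ on $L^{q/q_0}(\mu)$ with $q/q_0=p'/p_0'<1$ under the stated hypothesis $p_0\in(1,p)$, whereas only the weak type $(1,1)$ and the strong type $(r,r)$ for $r>1$ are available. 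This exponent clash deserves to be flagged.

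The substitute you offer, however, is a programme rather than a proof. Everything rests on the good-$\lambda$ inequality, and within it on the single step ``$Mf(x)>\beta\lambda$ and $x\in\tilde R_k$ force $Mb_k(x)>\lambda$'', i.e.\ $Mh\leq C\lambda$ on $\tilde R_k$. Unwinding this, you need (a) $|f_{\tilde R_k^*}|\leq C\lambda$, and (b) that every admissible trapezoid $R$ meeting $\tilde R_k$ but not contained in $\tilde R_k^*$ satisfies $\mu(R)^{-1}\int_R|f|\di\mu\leq C\lambda$. In the Euclidean argument (b) comes either from engulfing $R$ in a comparable-measure dilate reaching a point where $Mf\leq\lambda$ (non-dyadic case, via doubling) or from nestedness (dyadic case). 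Neither mechanism survives here: enlarging a trapezoid by a fixed multiple of its height multiplies its measure by a factor exponential in that height, and the family of admissible trapezoids through a fixed vertex is not totally ordered, so such an $R$ need neither contain nor be contained in $R_k$, and its status in the Hebisch--Steger stopping time says nothing about its mean. You acknowledge exactly this point as ``the main technical obstacle'' and leave it unresolved, but it is the whole content of the argument; the distributional inequality between $M$ and the sharp function that your good-$\lambda$ estimate amounts to is precisely what the Remark closing Section 4 describes as still unknown in this setting. The outer layers of your argument (the reduction to $Mf$, the truncated integral $I_K$, the absorption with $\gamma$ small) are standard and correct, but they rest on an unproved foundation, so the proposal as it stands reduces Theorem \ref{t: normaLpfsharp} to an open problem rather than proving it.
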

\begin{proof}
Given $f\in L^{p_0}(\mu)$ real-valued we define for every $k\in\mathbb N$
$$
f_k(x) = \begin{cases}
f(x) & \text{if } |f(x)| \leq k \\ 
k \frac{f(x)}{|f(x)|} & \text{if } |f(x)|>k.
\end{cases}
$$ 
Then $f_k$ converges to $f$ in $L^{p_0}(\mu)$ when $k\rightarrow +\infty$, $f_k\in L^{\infty}(\mu)$ and by Proposition \ref{properties-sharp} (iv) 
\begin{equation}\label{pointwiseboundsharp}
f_k^{\sharp,p_0}\leq C\, f^{\sharp,p_0}\,.
\end{equation}
Denote by $q$ and $q_0$ the conjugate exponents of $p$ and $p_0$, respectively. Take a function $g\in L^q(\mu)\cap L^{q_0}(\mu)$ with $\|g\|_{L^q}\leq 1$. For every $k\in\mathbb N$ define 
$$
\tilde g_k=g\chi_{\tilde Q_k}\qquad{\rm{and}}\qquad  I_k=\int \tilde g_k\di\mu\,,
$$
where $\tilde Q_k$ are the sets introduced in Proposition \ref{l: ricoprimento}. We then define
$$
g_k=\tilde g_k-I_k\,\mu(\tilde Q_{2^k})^{-1}\chi_{\tilde Q_{2^k}}=\tilde g_k-r_k\,.
$$
The function $g_k$ is in $L^q(\mu)$, it has vanishing integral and is supported in $\tilde Q_{2^k}$; hence $g_k$ lies in $H^1(\mu)$. Moreover, $g_k$ tends to $g$ pointwise for $k\rightarrow +\infty$ and $|g_k|\leq |g|+r$, where $r=\sum_k |r_k|$. We have that 
$$
\|r_k\|_{L^q}\leq \mu(\tilde Q_k)^{1/{q'}}\mu(\tilde Q_{2^k})^{-1+1/q}=\Big(\frac{q^k(k+1)}{q^{2^k}(2^k+1)}\Big)^{1/{q'}}\,,
$$
and then $r\in L^q(\mu)$. Hence $g_k$ tends to $g$ also in $L^q(\mu)$ for $k\rightarrow +\infty$.

By applying Proposition \ref{intfg} to $f_k$ and $g_k$ and H\"older's inequality we get
$$
\Big|\int f_k\,g_k\di\mu\Big|\leq \int f_k^{\sharp,p_0} (M(|g_k|^{q_0}))^{1/{q_0}}\di\mu\leq \|f_k^{\sharp,p_0} \|_{L^p} \,\|(M(|g_k|^{q_0}))^{1/{q_0}}\|_{L^q}\,.
$$
Taking the limit for $k\rightarrow +\infty$ on the left-hand side and by applying \eqref{pointwiseboundsharp} and the boundedness of the Hardy--Littlewood maximal function on $L^{q/q_0}(\mu)$ (see \cite{ATV1}) we deduce that
\begin{equation}\label{estimate-freal}
\Big|\int f \,g\di\mu\Big|\leq C\, \|f^{\sharp,p_0} \|_{L^p} \,\|g_k \|_{L^q}\leq C\, \|f^{\sharp,p_0} \|_{L^p} \,\|g \|_{L^q}\leq C\,\|f^{\sharp,p_0} \|_{L^p}\,.
\end{equation}
Since the previous inequality holds for every $g\in L^q(\mu)\cap L^{q_0}(\mu)$ with $\|g\|_{L^q}\leq 1$ with constants independent of $g$, we deduce that $\|f\|_{L^p}\leq C\, \|f^{\sharp,p_0}\|_{L^p}$. 

The case when $f$ is complex-valued follows by applying \ref{estimate-freal} to $\Re f$ and $\Im f$ and arguing as above. 
\end{proof}
	
\begin{remark}
Proposition \ref{intfg} and Theorem \ref{t: normaLpfsharp} are inspired by similar results involving the sharp maximal function in the Euclidean setting (see \cite[\S 2, Ch. IV]{S}). More precisely, Proposition \ref{intfg} differs from \cite[Formula (16) Ch. IV]{S}  because we require an extra integrability condition on the function $g\in H^1(\mu)$ and the maximal function involved here is a variant of the Hardy--Littlewood maximal function. This is due to the fact that a maximal characterization of the Hardy space $H^1(\mu)$ is not available in our setting.  

The inequality \eqref{weaksharp} is a weak version of the inequality 
$$
\|f\|_{L^p}\leq C\, \|f^{\sharp,1}\|_{L^p}\,,$$
which is still unknown in the setting of the present paper. The proof of such inequality would probably require a distributional inequality involving both the Hardy--Littlewood and the sharp maximal functions (or a dyadic version of them) that is still work in progress. 

\end{remark}


	\bigskip
	
	{\bf{Acknowledgments.}} 	
	Work partially supported by the MIUR project ``Dipartimenti di Eccellenza 2018-2022" (CUP E11G18000350001). 
	
	The authors are members of the Gruppo Nazionale per l'Analisi Matema\-tica, la Probabilit\`a e le loro Applicazioni (GNAMPA) of the Istituto Nazionale di Alta Matematica (INdAM).

\end{document}